\theoremstyle{plain}
\newtheorem{theorem}{Theorem}[section]
\newtheorem{corollary}[theorem]{Corollary}
\newtheorem{lemma}[theorem]{Lemma}
\theoremstyle{definition}
\newtheorem{remark}[theorem]{Remark}
\newtheorem{algorithm}[theorem]{Algorithm}
\theoremstyle{remark}
\numberwithin{equation}{section}
\newcommand{\N}{\mathbb N}
\newcommand{\Z}{\mathbb Z}
\newcommand{\C}{\mathbb C}
\newcommand{\SO}{\operatorname{SO}}
\newcommand{\Sp}{\operatorname{Sp}}
\newcommand{\sll}{\mathfrak{sl}}
\newcommand{\so}{\mathfrak{so}}
\newcommand{\spp}{\mathfrak{sp}}
\newcommand{\op}{\operatorname}
\newcommand{\diag}{\operatorname{diag}}
\newcommand{\ba}{\backslash}
\newcommand{\norma}[1]{\|{#1}\|_1}
\newcommand{\contador}{\ell}
\newcommand{\tipo}{\mathrm}
\title[Weight multiplicity formulas]{Weight multiplicity formulas for bivariate representations of classical Lie algebras}
\author{Emilio~A.~Lauret, Fiorela Rossi Bertone}
\address{CIEM--FaMAF (CONICET), Universidad Nacional de C\'ordoba, Medina Allende s/n, Ciudad Universitaria, 5000 C\'ordoba, Argentina.}
\email{elauret@famaf.unc.edu.ar, rossib@famaf.unc.edu.ar}
\subjclass[2010]{17B10, 17B22, 22E46}
\keywords{Weight multiplicity, bivariate representations.}
\date{\today}
\begin{document}

\begin{abstract}
A bivariate representation of a complex simple Lie algebra is an irreducible representation having highest weight a combination of the first two fundamental weights.
For a complex classical Lie algebra, we establish an expression for the weight multiplicities of bivariate representations.
\end{abstract}

\maketitle

\section{Introduction}\label{sec:intro}

This article concerns on giving weight multiplicity formulas, continuing the previous authors' article \cite{LR-fundstring}.
In that article, it was determined, for a classical complex Lie algebra $\mathfrak g$, a closed explicit formula for the weight multiplicities of any representation of any $p$-fundamental string.
Such a representation is an irreducible representation of $\mathfrak g$ with highest weight $k\omega_1+\omega_p$ for some non-negative integer $k$.
Here, $\omega_j$ denotes the $j$th fundamental weight associated with the root system of $\mathfrak g$.

The primary goal of the present article is to find an expression for the weight multiplicity of every \emph{bivariate representation} of a classical complex Lie algebra $\mathfrak g$.
A bivariate representation is an irreducible representation with highest weight $a\omega_1+b\omega_2$ for some non-negative integers $a$ and $b$ (cf.\ \cite{Maddox14}).
See \cite[\S1]{LR-fundstring} for references of classical and recent previous results on this problem.

In Section~\ref{sec:notation} we introduce the standard notation used to describe the root system associated to a classical complex Lie algebra  $\mathfrak g$.
In particular, for $\mathfrak g$ of type $\tipo B_n$, $\tipo C_n$ or $\tipo D_n$ and $\mathfrak h$ a fixed Cartan subalgebra of $\mathfrak g$, $\{\varepsilon_1,\dots,\varepsilon_n\}$ denotes the basis of $\mathfrak h^*$ satisfying that the set of simple roots are $\{\varepsilon_1-\varepsilon_{2},\dots, \varepsilon_{n-1}-\varepsilon_n,\varepsilon_n\}$ for type $\tipo B_n$, $\{\varepsilon_1-\varepsilon_{2},\dots, \varepsilon_{n-1}-\varepsilon_n,2\varepsilon_n\}$ for type $\tipo C_n$, and $\{\varepsilon_1-\varepsilon_{2},\dots, \varepsilon_{n-1}-\varepsilon_{n},\varepsilon_{n-1}+\varepsilon_{n}\}$ for type $\tipo D_n$.
According to this notation, bivariate representations have highest weight of the form  $k\varepsilon_1+l\varepsilon_2$ for integers $k\geq l\geq 0$.

The obtained weight multiplicity formulas for types $\tipo B_n$, $\tipo C_n$ and $\tipo D_n$ are in Theorems~\ref{thmBn:mulip_bivar}, \ref{thmCn:mulip_bivar} and \ref{thmDn:mulip_bivar}  respectively.
The expressions involve a sum over partitions of the integer numbers  $\leq l$, so they may not be considered `closed explicit formulas' like in \cite{LR-fundstring}.
An immediate and curious consequence of the formulas is the next result.

\begin{theorem}\label{thm:depending}
	Let $\mathfrak g$ be a classical complex Lie algebra of type $\tipo B_n$, $\tipo C_n$ or $\tipo D_n$. Let $k\geq l\geq 0$ integers and $\mu=\sum_{i=1}^{n} a_i\varepsilon_i$ with $a_i\in\Z$ for all $i$.
	The multiplicity of $\mu$ in the irreducible representation $\pi_{k\varepsilon_1+l\varepsilon_2}$ of $\mathfrak g$ with highest weight ${k\varepsilon_1+l\varepsilon_2}$, denoted by $m_{\pi_{k\varepsilon_1+l\varepsilon_2}}(\mu)$, depends only on
	\begin{equation}
	\norma{\mu}:=\sum_{i=1}^n |a_i|
	\quad\text{and}\quad
	\contador_t(\mu):=\#\{i: 1\leq i\leq n,\, |a_i|=t\}\quad\text{for all  $0\leq t\leq l-1$}.
	\end{equation}
	In other words, if $\mu$ and $\mu'$ satisfy $\norma{\mu}=\norma{\mu'}$ and $\contador_t(\mu) = \contador_t(\mu')$ for all $0\leq t\leq l-1$, then $m_{\pi_{k\varepsilon_1+l\varepsilon_2}}(\mu) = m_{\pi_{k\varepsilon_1+l\varepsilon_2}}(\mu')$.
\end{theorem}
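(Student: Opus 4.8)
The plan is to derive the statement as a direct consequence of the explicit multiplicity formulas established in Theorems~\ref{thmBn:mulip_bivar}, \ref{thmCn:mulip_bivar} and~\ref{thmDn:mulip_bivar}, after a preliminary symmetry reduction. First I would record that weight multiplicities are invariant under the Weyl group $W$ of $\mathfrak g$, so that $m_{\pi_{k\varepsilon_1+l\varepsilon_2}}(\mu)=m_{\pi_{k\varepsilon_1+l\varepsilon_2}}(w\mu)$ for every $w\in W$. For $\mathfrak g$ of type $\tipo B_n$ or $\tipo C_n$ the group $W$ consists of all signed permutations of the coordinates $a_1,\dots,a_n$, whereas for type $\tipo D_n$ it consists of those signed permutations effecting an even number of sign changes. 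In every case the quantities $\norma{\mu}$ and $\contador_t(\mu)$ are manifestly $W$-invariant, since they are built solely from the multiset $\{|a_1|,\dots,|a_n|\}$. Thus it suffices to compare $m_{\pi_{k\varepsilon_1+l\varepsilon_2}}(\mu)$ with $m_{\pi_{k\varepsilon_1+l\varepsilon_2}}(\mu')$ for two dominant representatives, and I may assume $a_1\geq\cdots\geq a_n\geq 0$ (with the usual $\tipo D_n$ caveat treated below).

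Next I would turn to the explicit formulas. Each of Theorems~\ref{thmBn:mulip_bivar}, \ref{thmCn:mulip_bivar} and~\ref{thmDn:mulip_bivar} expresses $m_{\pi_{k\varepsilon_1+l\varepsilon_2}}(\mu)$ as a finite sum indexed by partitions of integers $\leq l$, and the key observation I want to isolate is that the summand attached to each such partition depends on $\mu$ only through $\norma{\mu}$ and the counts $\contador_0(\mu),\dots,\contador_{l-1}(\mu)$. The structural reason is that a coordinate $a_i$ enters the combinatorics of a partition with parts bounded by $l$ in a way that saturates once $|a_i|\geq l$: coordinates of absolute value at least $l$ can accommodate any admissible part freely, so they influence the count only through their number and their total contribution to $\sum_i|a_i|$, that is, through $\norma{\mu}$ together with $n-\sum_{t=0}^{l-1}\contador_t(\mu)$. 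By contrast, a coordinate with $|a_i|=t<l$ interacts with the partition in a $t$-dependent manner and must be tracked individually, which is exactly the role of $\contador_t(\mu)$ for $0\leq t\leq l-1$. Carrying this bookkeeping out term by term in each of the three formulas yields the claim: if $\norma{\mu}=\norma{\mu'}$ and $\contador_t(\mu)=\contador_t(\mu')$ for all $0\leq t\leq l-1$, then the two formulas agree summand by summand, whence $m_{\pi_{k\varepsilon_1+l\varepsilon_2}}(\mu)=m_{\pi_{k\varepsilon_1+l\varepsilon_2}}(\mu')$.

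I expect the main obstacle to be twofold, and concentrated in type $\tipo D_n$. On the representation-theoretic side, the $\tipo D_n$ Weyl group realizes only even sign changes, so the reduction to a dominant representative and the passage from $\mu$ to $\mu'$ require an extra check that flipping a single sign, or permuting across coordinates of equal absolute value, does not alter the multiplicity; this is harmless whenever some $a_i$ vanishes or some absolute value is repeated, and the residual configurations must be matched directly against the formula. On the combinatorial side, the genuine work is the inspection step: one must verify, uniformly in the partition index, that the $\tipo D_n$ formula --- which typically carries correction terms absent in types $\tipo B_n$ and $\tipo C_n$ --- still factors through $\norma{\mu}$ and $\contador_0(\mu),\dots,\contador_{l-1}(\mu)$ and introduces no dependence on the individual counts $\contador_t(\mu)$ with $t\geq l$. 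Once the three formulas are in hand this is a routine but careful check, which is why the statement is best presented as an immediate corollary of Theorems~\ref{thmBn:mulip_bivar}, \ref{thmCn:mulip_bivar} and~\ref{thmDn:mulip_bivar} rather than proved by an independent argument.
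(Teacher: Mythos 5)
Your proposal is correct and follows essentially the same route as the paper: the result is stated there as an immediate consequence of Theorems~\ref{thmBn:mulip_bivar}--\ref{thmDn:mulip_bivar}, whose right-hand sides visibly depend on $\mu$ only through $r(\mu)=(k+l-\norma{\mu})/2$ and $\contador_0(\mu),\dots,\contador_{l-1}(\mu)$. Your preliminary Weyl-group reduction (and the attendant worry about type $\tipo D_n$) is harmless but unnecessary, since those theorems are already stated for arbitrary $\mu\in\Z^n$.
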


This theorem is analogous to \cite[Cor.~1.1]{LR-fundstring} (see also \cite[Lem.~3.3]{LMR-onenorm}),  which states that the multiplicity of a weight $\mu$ in representations in $p$-fundamental strings depends only on $\norma{\mu}$ and $\contador_0(\mu)$.
Such representations have highest weights of the form $k\omega_1+\omega_p$ for $k\geq0$ and, $1\leq p\leq n-1$ for type $\tipo B_n$, $1\leq p\leq n$ for type $\tipo C_n$, $1\leq p\leq n-2$ for type $\tipo D_n$.

In the best authors' knowledge, the weight multiplicity formulas in Theorems~\ref{thmBn:mulip_bivar}--\ref{thmDn:mulip_bivar} are not in the literature.
Nevertheless, Maddox~\cite{Maddox14} obtained a multiplicity formula for bivariate representations when $\mathfrak g$ is of type $\tipo C_n$.
However, her expression differs significantly from ours.
In particular, Theorem~\ref{thm:depending} does not follow immediately from her formula.
See Remark~\ref{rem:Maddox} for more details.

We compare from a computational point of view, the multiplicity formulas obtained in Theorems~\ref{thmBn:mulip_bivar}--\ref{thmDn:mulip_bivar} with Freudenthal's famous formula (see Subsection~\ref{subsec:comparison}).
We used the open-source mathematical software \texttt{Sage}~\cite{Sage} to do the calculations.
It was evidenced in the computational results shown in Table~\ref{tabla:comparison} that the bivariate algorithm based on Theorems~\ref{thmBn:mulip_bivar}--\ref{thmDn:mulip_bivar} is faster than Freudenthal algorithm for most of small values of $k$ and $l$.
Moreover, for any choice of $k$ and $l$, the same conclusion would hold for $n$ big enough.
It is probably more significant the fact that Theorems~\ref{thmBn:mulip_bivar}--\ref{thmDn:mulip_bivar} return in a speedy way the multiplicity of a single weight.
The situation is very different with Freudenthal's formula since it is defined recursively, and moreover, it has to calculate the multiplicities of many intermediate weights in case the original weight is far away from the highest weight.
Many more related remarks are made in Subsection~\ref{subsec:comparison}.

We have already mentioned that the expressions for the weight multiplicities in Theorems~\ref{thmBn:mulip_bivar}--\ref{thmDn:mulip_bivar} and \ref{thmAn:multip(k,l)} are not closed explicit formulas since they involve a sum over partitions.
However, in some particular cases, one can write down the corresponding partitions obtaining a (long) closed expression.
For instance, the statements for the multiplicity of the weight $\mu=0$ for types $\tipo B_n$, $\tipo C_n$ and $\tipo D_n$ are included in Subsection~\ref{subsec:closedformulas}.
Furthermore, a formula for $\pi_{k\varepsilon_1+2\varepsilon_2}$ in type $\tipo D_n$ is also part of that subsection.

Although weight multiplicity formulas are interesting in themselves, the authors were motivated by their application in spectral geometry (see \cite[\S7]{LR-fundstring}).
In Remark~\ref{rem:applications} we mention possible applications for the weight multiplicity formulas obtained in this article for the determination of the spectra of some natural differential operators on spaces covered by compact symmetric spaces with abelian fundamental groups.

The weight multiplicity formula for $\mathfrak g$ of type $\tipo A_n$ is determined in Section~\ref{sec:typeAn}.
Furthermore, the corresponding expression for the case $l=2$ is stated in Corollary~\ref{corAn:for(l=2)}.
This case, $\mathfrak g$ of type $\tipo A_n$, is much simpler than the previous ones.
The obtained expressions are probably already present in the extensive literature on this area.

The article is organized as follows.
Section~\ref{sec:notation} introduces the necessary notation to read the statements of the primary results.
Section~\ref{sec:typeBCD}, which considers classical Lie algebras of type $\tipo B_n$, $\tipo C_n$ and $\tipo D_n$, is divided in five subsections.
The first one states the weight multiplicity formulas which are proven in the second one.
The computational comparison is made in Subsection~\ref{subsec:comparison}.
Subsection~\ref{subsec:closedformulas} shows closed explicit formulas in particular cases.
Section~\ref{sec:typeBCD} ends with some remarks.
The case when $\mathfrak g$ is of type $\tipo A_n$ is considered in Section~\ref{sec:typeAn}.

\section{Notation}\label{sec:notation}

Throughout this section, $\mathfrak g$ denotes a classical complex Lie algebra of type $\tipo B_n$, $\tipo C_n$ and $\tipo D_n$, namely $\so(2n+1,\C)$, $\spp(n,\C)$, $\so(2n,\C)$ respectively.
We assume $n\geq2$ for types $\tipo B_n$ and $\tipo C_n$, and $n\geq3$ for $\tipo D_n$.
We fix a Cartan subalgebra $\mathfrak h$ of $\mathfrak g$.

Let $\{\varepsilon_{1}, \dots,\varepsilon_{n}\}$ be the standard basis of $\mathfrak h^*$.
Then, the sets of positive roots $\Sigma^+(\mathfrak g, \mathfrak h)$ and the space of integral weights $P(\mathfrak g)$ are respectively given by
\begin{align*}
\{\varepsilon_i\pm\varepsilon_j: i<j\}\cup\{\varepsilon_i\}
&\quad\text{ and }\quad
\{\textstyle \sum_i a_i\varepsilon_i: a_i\in\Z\,\forall i, \text{ or } a_i-\frac12 \in\Z\,\forall i\}
 &&\text{ for $\mathfrak g$ of type $\tipo B_n$,}\\
\{\varepsilon_i\pm \varepsilon_j: i<j \}\cup\{2\varepsilon_i\}
&\quad\text{ and }\quad
\{\textstyle \sum_i a_i\varepsilon_i: a_i\in\Z\,\forall i\}
 &&\text{ for $\mathfrak g$ of type $\tipo C_n$,}\\
\{\varepsilon_i\pm\varepsilon_j: i<j\}
&\quad\text{ and }\quad
\{\textstyle \sum_i a_i\varepsilon_i: a_i\in\Z\,\forall i, \text{ or } a_i-\frac12\in\Z\,\forall i\}
 &&\text{ for $\mathfrak g$ of type $\tipo D_n$.}
\end{align*}
Furthermore, $\sum_i a_i\varepsilon_i\in P(\mathfrak g)$ is dominant if and only if $a_1\geq a_{2}\geq \dots\geq a_{n}\geq0$ for types $\tipo B_n$ and $\tipo C_n$ and $a_1\geq  \dots\geq a_{n-1}\geq |a_n|$ for type $\tipo D_n$.

By the Highest Weight Theorem, irreducible representations of $\mathfrak g$ are in correspondence with integral dominant weights.
We denote by $P^{{+}{+}}(\mathfrak g)$ the set of integral dominant weights and by $\pi_\lambda$ the irreducible representation of $\mathfrak g$ with highest weight $\lambda\in P^{{+}{+}}(\mathfrak g)$.

The first two fundamental weights are $\omega_1=\varepsilon_1$ and $\omega_2=\varepsilon_1+\varepsilon_2$. 
Hence, any non-negative integer combination of them is of the form $k\varepsilon_1+l\varepsilon_2$ for some integers $k\geq l\geq 0$.

The following notation is essential to read the weight multiplicity formulas in the next section.
We will write $\Z^n$ for the set of elements $\sum_i a_i\varepsilon_i \in P(\mathfrak g)$ such that $a_i\in\Z$ for all $i$.
For a weight $\mu=\sum_{i=1}^n a_i\varepsilon_i\in\Z^n $ and $t$ a non-negative integer, define
\begin{align}\label{eq:def_norma-contador}
 \norma{\mu}& = \sum_{i=1}^n |a_i|, &
 \contador_t(\mu) &= \#\{ i: 1\leq i\leq n,\, |a_i|=t\}.
\end{align}
Given $N\geq0$, let $\mathcal{Q}_n(N)$ be the set of all partitions of $N$ with length $\leq n$, that is
\begin{align}\label{eq:def-A(N)}
 \mathcal{Q}_n(N)&= \{ \mathbf{q}=(q_1,q_2,\dots,q_n)\in \Z^n: q_1\geq q_2\geq\dots\geq q_n\geq0,\, \textstyle \sum_{i=1}^n q_i=N \}.
\end{align}
Furthermore, for $\mathbf{q}\in\mathcal{Q}_n(N)$ and $1\leq j\leq N$ we set
\begin{align}\label{eq:def-s_j}
 s_j^{\mathbf q}&:= \#\{i:1\leq i\leq {n},\, q_i=j\},
 \\
 \label{eq:def-C^q}
 \mathcal{B}^{\mathbf q}
 &:= \{ (\beta^1_1,\beta^2_1,\beta^2_2,\beta^3_1,\beta^3_2, \beta^3_3,\dots,\beta^N_N): \, \beta^j_t\geq 0, \, \textstyle \sum_{t=1}^{j} \beta^j_t\leq s_j^{\mathbf q} \},\\
 \label{eq:def-C^q_b}
 \mathcal{A}^{\mathbf q}_\beta
 &:= \{ (\alpha^1_1,\alpha^2_1,\alpha^2_2,\alpha^3_1,\alpha^3_2, \alpha^3_3, \dots,\alpha^N_N): \, 0\leq \alpha^j_t\leq  \beta^j_t \}
 	\qquad  \text{for any } \beta\in \mathcal{B}^{\mathbf q}.
\end{align}

Throughout the article we use the convention $\binom{b}{a}=0$ if $a<0$ or $b<a$.

\section{Types $\tipo B_n$, $\tipo C_n$ and $\tipo D_n$}\label{sec:typeBCD}
In this section we consider $\mathfrak g$ a classical complex Lie algebra of types $\tipo B_n$, $\tipo C_n$ and $\tipo D_n$.
We assume that $n\geq2$ for types $\tipo B_n$ and $\tipo C_n$ and $n\geq3$ for type $\tipo D_n$.

\subsection{Main results}
We now state the three theorems which establish the weight multiplicity formulas for types $\tipo B_n$, $\tipo C_n$ and $\tipo D_n$ respectively.
The notation required was introduced in the previous section.
The formulas consider weights in $\Z^n$, since the multiplicity in $\pi_{k\varepsilon_{1}+l\varepsilon_{2}}$ of any weight in $P(\mathfrak g)\smallsetminus \Z^n$ vanishes (see Remark~\ref{rem:not-G-integral}).

\begin{theorem}\label{thmBn:mulip_bivar} \textup{(Type $\tipo B_n$)}
Let $\mathfrak g=\so(2n+1,\C)$ for some $n\geq2$. Let $k\geq l\geq 0$ integers and $\mu\in\Z^n$.
If $r(\mu):=(k+l-\norma{\mu})/2$ is negative, then $m_{\pi_{k\varepsilon_1+l\varepsilon_{2}}}(\mu)=0$, and otherwise
\begin{align*}
m_{\pi_{k\varepsilon_1+l\varepsilon_{2}}}(\mu)
=&\quad B_n(l,r(\mu),\contador_0(\mu), \dots,\contador_{l-1}(\mu))\\
&- B_n(l-1,r(\mu),\contador_0(\mu), \dots,\contador_{l-1}(\mu))\\
&- B_n(l-1,r(\mu)-1,\contador_0(\mu), \dots,\contador_{l-1}(\mu))\\
&+ B_n(l-2,r(\mu)-1,\contador_0(\mu), \dots,\contador_{l-1}(\mu)),
\end{align*}
where
 \begin{align*}
  B_n(l,r,\contador_0,\dots,\contador_{l-1})=
  & \sum_{0\leq N\leq l}\; \sum_{\mathbf{q}\in\mathcal{Q}_n(N)}\;  \sum_{\beta\in \mathcal{B}^\mathbf{q}} \; \sum_{\alpha\in \mathcal{A}^\mathbf{q}_{\beta}}
  \binom{\lfloor(l-N)/2\rfloor+n-1}{n-1} \\
  &\binom{\lfloor r- (l+N)/{2}\rfloor +\sum_{j=1}^N\sum_{i=1}^j (j+1-i)\alpha_i^j+n-1}{n-1} \\
  & \prod_{j=1}^{N} \left( 2^{s_j^{\mathbf{q}}-\sum_{i=1}^j \beta^j_i} \binom{\beta_1^j}{\alpha_1^j} \binom{n-\sum_{t=0}^{j-1}\contador_t -\sum_{r=j+1}^N\sum_{s=1}^{r-j+1} \beta_s^r}{\beta_1^j}   \right.\\
  &
  \left. \binom{\contador_0-\sum_{h=j+1}^{N}(s^{\mathbf{q}}_h-\sum_{s=1}^{h} \beta_s^h)}{s^{\mathbf{q}}_j- \sum_{t=1}^{j}\beta_t^j}  \prod_{i=2}^j \binom{\contador_{j-i+1} -\sum_{t=1}^{N-j}\beta_{i+1}^{j+t}}{\beta_i^j} \binom{\beta_i^j}{\alpha_i^j}\right).
 \end{align*}
\end{theorem}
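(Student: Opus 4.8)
The plan is to compute $m_{\pi_{k\varepsilon_1+l\varepsilon_2}}(\mu)$ from the Weyl character formula and to reduce the resulting alternating sum to the displayed four-term expression, in which $B_n$ records the $\mu$-weight multiplicity of an auxiliary, monomial-countable module. Since $\mu\mapsto m_{\pi_{k\varepsilon_1+l\varepsilon_2}}(\mu)$ is invariant under the Weyl group $W$, I would first reduce to the case where $\mu=\sum_i a_i\varepsilon_i$ is dominant, i.e.\ $a_1\geq\dots\geq a_n\geq0$. This normalization is exactly what makes the final count organize itself around the partition $(|a_1|,\dots,|a_n|)$ of $\norma{\mu}$, hence around the data $\norma{\mu}$ and $\contador_t(\mu)$, and it is the natural setting for the lattice-point counts below. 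Writing $\rho$ for the half-sum of the positive roots, I start from
\[
\op{ch}(\pi_{k\varepsilon_1+l\varepsilon_2})=\frac{\sum_{w\in W}(-1)^{\ell(w)}e^{w(k\varepsilon_1+l\varepsilon_2+\rho)}}{\sum_{w\in W}(-1)^{\ell(w)}e^{w\rho}},
\]
or, equivalently, from Kostant's multiplicity formula.

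\emph{The structural identity.} The two-row shape $k\varepsilon_1+l\varepsilon_2$ pairs nontrivially with only the coroots of $\varepsilon_1-\varepsilon_2$ and $\varepsilon_2-\varepsilon_3$, and I expect this to force the Weyl-group alternating sum to collapse into a rank-two inclusion–exclusion: introducing shift operators $T_1\colon(l,r)\mapsto(l-1,r)$ and $T_2\colon(l,r)\mapsto(l-1,r-1)$ on the arguments of $B_n$, the claim is that $m_{\pi_{k\varepsilon_1+l\varepsilon_2}}(\mu)=(1-T_1)(1-T_2)\,B_n\big(l,r(\mu),\contador_0(\mu),\dots,\contador_{l-1}(\mu)\big)$, whose expansion is precisely the four stated terms with signs $+,-,-,+$. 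This reduces the theorem to identifying $B_n(l,r,\contador_0,\dots,\contador_{l-1})$ as the $\mu$-weight multiplicity of the corresponding auxiliary module built from symmetric powers of the standard representation $\C^{2n+1}$, with the boundary convention $B_n(l,\cdot)=0$ for $l<0$. Two sanity anchors guide this step: for $l=0$ the formula must collapse to $B_n(0,r)=\binom{r+n-1}{n-1}$, which indeed equals the multiplicity of $\pi_{k\omega_1}$ (harmonics of degree $k$) via $\binom{r+n}{n}-\binom{r+n-1}{n}=\binom{r+n-1}{n-1}$; and for $l=1$ it must reduce to $B_n(1,r)-B_n(0,r)-B_n(0,r-1)$, recovering the $2$-fundamental string formula of \cite{LR-fundstring}.

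\emph{Evaluating $B_n$.} Because the auxiliary module is assembled from symmetric powers of $\C^{2n+1}$, its $\mu$-weight multiplicity is a count of monomials in the weights $\{0,\pm\varepsilon_1,\dots,\pm\varepsilon_n\}$, which I would organize as a \textquotedblleft second-row\textquotedblright\ contribution of total size $N\leq l$ together with a symmetric (harmonic) remainder. Fixing the partition $\mathbf q\in\mathcal Q_n(N)$ of the second-row sizes per coordinate, so that $s_j^{\mathbf q}$ counts the coordinates carrying a contribution of size exactly $j$, the data $\beta\in\mathcal B^{\mathbf q}$ and $\alpha\in\mathcal A^{\mathbf q}_\beta$ encode, compatibly with the prescribed $\contador_t(\mu)$, which of these coordinates realize a given absolute value of $\mu$ and which sign choices remain free: the factors $2^{\,s_j^{\mathbf q}-\sum_i\beta_i^j}$ account for free signs, and the nested binomials select positions subject to the $\contador_t$-constraints, with the condition $\alpha_i^j\leq\beta_i^j$ governing coordinates that receive input from both rows. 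The two leading binomials, with arguments $\lfloor(l-N)/2\rfloor$ and $\lfloor r-(l+N)/2\rfloor$, then count the remaining symmetric freedom in $n$ variables, the floors reflecting the short root of type $\tipo B_n$. Assembling these independent choices yields the stated quadruple sum.

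\textbf{Main obstacle.} The delicate part is the evaluation of $B_n$: proving that the partition-indexed inclusion–exclusion over $(\mathbf q,\beta,\alpha)$ counts each monomial of weight $\mu$ exactly once, with the correct powers of $2$ and the correct position-binomials tied to $\contador_0(\mu),\dots,\contador_{l-1}(\mu)$. In particular one must verify that coordinates receiving contributions from both rows are neither over- nor under-counted, which is exactly what the $\alpha_i^j\leq\beta_i^j$ nesting is designed to enforce. A secondary difficulty is establishing the structural identity \emph{on the nose} for all ranges of $k$ and $l$ (including small $l$ and the $B_n(l,\cdot)=0$ convention), namely that the collapse of the Weyl-group sum leaves no further surviving terms and that the floor functions produced by the short root agree with those appearing in the formula for $B_n$.
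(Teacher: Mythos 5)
Your overall skeleton is the right one — reduce to a four\-/term inclusion--exclusion $B_n(l,r)-B_n(l-1,r)-B_n(l-1,r-1)+B_n(l-2,r-1)$ in which $B_n$ is the $\mu$-multiplicity of a tensor product of two ``one-row'' representations, then evaluate $B_n$ by partitioning the weights of the small factor according to the data $(\mathbf q,\beta,\alpha)$ — and your combinatorial reading of $\mathbf q$, $\beta$, $\alpha$ and of the factors $2^{s_j^{\mathbf q}-\sum_i\beta_i^j}$ agrees with what the paper actually does. The genuine gap is in how you propose to establish the structural identity. The claim that the Weyl-group alternating sum in the character formula ``collapses into a rank-two inclusion--exclusion'' because $k\varepsilon_1+l\varepsilon_2$ pairs nontrivially with only two simple coroots is not a valid mechanism: $k\varepsilon_1+l\varepsilon_2+\rho$ is regular, so all $2^n n!$ terms of the numerator survive, and no such collapse occurs at the level of the Weyl or Kostant formula. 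The identity you want is not a character-formula phenomenon at all; it is the telescoping consequence of the Pieri-type fusion rule of Koike--Terada,
$\pi_{k\varepsilon_{1}}\otimes \pi_{l\varepsilon_{1}}\simeq \bigoplus_{j=0}^{l}\bigoplus_{i=0}^{j} \pi_{(k+l-j-i)\varepsilon_1+(j-i)\varepsilon_2}$,
from which $\tau_{k,l}-\tau_{k+1,l-1}-\tau_{k-1,l-1}+\tau_{k,l-2}\simeq\pi_{k\varepsilon_1+l\varepsilon_2}$ in the virtual ring follows by two subtractions (this is Lemma~\ref{lem:virtualring(tau)}). Without this input your four-term formula is asserted, not proved, and the sanity checks at $l=0,1$ do not substitute for it. Note also that the shift in $r$ works out only because $r(\mu)$ is computed from $k+l$, which is invariant under $(k,l)\mapsto(k+1,l-1)$ and drops by $2$ under $(k,l)\mapsto(k-1,l-1)$ or $(k,l-2)$; this bookkeeping should be made explicit.

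On the evaluation of $B_n$: identifying it with $m_{\pi_{k\varepsilon_1}\otimes\pi_{l\varepsilon_1}}(\mu)=\sum_\eta m_{\pi_{k\varepsilon_{1}}}(\mu-\eta)\,m_{\pi_{l\varepsilon_{1}}}(\eta)$ and grouping the $\eta$ by $(\mathbf q,\beta,\alpha)$ is exactly the paper's route, but two points you leave open are precisely where the work lies. First, one needs the closed formulas $m_{\pi_{k\varepsilon_1}}(\nu)=\binom{\lfloor (k-\norma{\nu})/2\rfloor+n-1}{n-1}$ for type $\tipo B_n$ (harmonics, as you note), and then the computation of $\norma{\mu-\eta}$ on each class, which is what produces the exponent $\lfloor r-(l+N)/2\rfloor+\sum_{j,i}(j+1-i)\alpha_i^j$ in the second leading binomial; reducing to $\mu$ dominant is essential here, as you say. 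Second, the count $\#\mathcal{P}^{\mathbf q}_{\beta,\alpha}$ giving the nested product of binomials must actually be carried out; you correctly flag this as the main obstacle but do not resolve the over/undercounting issue you raise. So the proposal is the right strategy with the right auxiliary objects, but both of its two pillars — the virtual-ring identity and the class count — are left unproved, and the proposed substitute for the first one (a Weyl-sum collapse) would not succeed.
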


\begin{theorem}\label{thmCn:mulip_bivar} \textup{(Type $\tipo C_n$)}
 Let $\mathfrak g=\spp(n,\C)$ for some $n\geq2$. Let $k\geq l\geq 0$ integers and $\mu\in\Z^n$.
 If $r(\mu):=(k+l-\norma{\mu})/2$ is not a non-negative integer, then $m_{\pi_{k\varepsilon_1+l\varepsilon_{2}}}(\mu)=0$, and otherwise
\begin{align*}
m_{\pi_{k\varepsilon_1+l\varepsilon_{2}}}(\mu)
=& \quad
C_n(l,r(\mu),\contador_0(\mu), \dots,\contador_{l-1}(\mu)) \\
&- C_n(l-1,r(\mu),\contador_0(\mu), \dots,\contador_{l-1}(\mu))\\
&- C_n(l-1,r(\mu)-1,\contador_0(\mu), \dots,\contador_{l-1}(\mu))\\
&+ C_n(l-2,r(\mu)-1,\contador_0(\mu), \dots,\contador_{l-1}(\mu)),
\end{align*}
 where
 \begin{align*}
  C_n(l,r,\contador_0,\dots,\contador_{l-1})= & \sum_{0\leq N\leq l,\atop N \equiv l(\operatorname{mod} 2)}\; \sum_{\mathbf{q}\in\mathcal{Q}_n(N)}\;  \sum_{\beta\in \mathcal{B}^\mathbf{q}} \; \sum_{\alpha\in \mathcal{A}^\mathbf{q}_{\beta}}
  \binom{(l-N)/2+n-1}{n-1} \\
  &\binom{r-(l+N)/2 +\sum_{j=1}^N\sum_{i=1}^j (j+1-i)\alpha_i^j+n-1}{n-1} \\
  &
  \prod_{j=1}^{N} \left( 2^{s_j^{\mathbf{q}}-\sum_{i=1}^j \beta^j_i} \binom{\beta_1^j}{\alpha_1^j} \binom{n-\sum_{t=0}^{j-1}\contador_t -\sum_{r=j+1}^N\sum_{s=1}^{r-j+1} \beta_s^r}{\beta_1^j}   \right.\\
  &
  \left. \binom{\contador_0-\sum_{h=j+1}^{N}(s^{\mathbf{q}}_h-\sum_{s=1}^{h} \beta_s^h)}{s^{\mathbf{q}}_j- \sum_{t=1}^{j}\beta_t^j}  \prod_{i=2}^j \binom{\contador_{j-i+1} -\sum_{t=1}^{N-j}\beta_{i+1}^{j+t}}{\beta_i^j} \binom{\beta_i^j}{\alpha_i^j}\right).
 \end{align*}
\end{theorem}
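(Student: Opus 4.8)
The plan is to reduce the computation of $m_{\pi_{k\varepsilon_1+l\varepsilon_2}}(\mu)$ to a weight count in a reducible but combinatorially transparent module, by means of the two-row case of the symplectic (Koike--Terada / El Samra--King) determinantal character identity, and then to evaluate that count explicitly. Write $V=\C^{2n}$ for the standard representation of $\spp(n,\C)$, whose weights are $\pm\varepsilon_1,\dots,\pm\varepsilon_n$, and recall that every symmetric power $S^m(V)$ is irreducible, $S^m(V)\cong\pi_{m\varepsilon_1}$; set $h_m=\op{ch}S^m(V)$, with $h_0=1$ and $h_m=0$ for $m<0$. The structural input is the character identity
\[
\op{ch}\pi_{k\varepsilon_1+l\varepsilon_2}=h_k\,h_l+h_k\,h_{l-2}-h_{k+1}\,h_{l-1}-h_{k-1}\,h_{l-1},
\]
which is the two-row instance of the symplectic Jacobi--Trudi rule; equivalently it is $\op{ch}\pi_{k\varepsilon_1+l\varepsilon_2}=s_{(k,l)}-s_{(k-1,l-1)}$ (the Littlewood restriction removing one vertical symplectic domino), expanded with the ordinary two-row Jacobi--Trudi identity $s_{(a,b)}=h_ah_b-h_{a+1}h_{b-1}$. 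I would first confirm this identity directly: it is immediate for $l\in\{0,1\}$ and is pinned down in general by the restriction rule.

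The second step is to identify $C_n(l,r,\contador_0,\dots,\contador_{l-1})$ with the multiplicity of $\mu$ in $S^{a}(V)\otimes S^{l}(V)$, where $a=\norma{\mu}-l+2r$ is chosen so that $\mu$ occurs using exactly $r$ mutually cancelling $\pm\varepsilon_i$ pairs. Since $r(\mu)=(k+l-\norma{\mu})/2$, the four products above are precisely the four terms of the theorem: computing the number of cancellation pairs of $\mu$ in each factor gives $h_kh_l\leftrightarrow C_n(l,r)$, $h_{k+1}h_{l-1}\leftrightarrow C_n(l-1,r)$, $h_{k-1}h_{l-1}\leftrightarrow C_n(l-1,r-1)$ and $h_kh_{l-2}\leftrightarrow C_n(l-2,r-1)$, with signs $+,-,-,+$ read off from the identity. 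The vanishing when $r(\mu)$ is not a non-negative integer, and on $P(\spp(n,\C))\smallsetminus\Z^n$, follows from this realization together with Remark~\ref{rem:not-G-integral}. Because the entire count is expressed through the $|a_i|$, only $\norma{\mu}$ and the $\contador_t(\mu)$ can enter, recovering the dependence asserted in Theorem~\ref{thm:depending}.

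It remains to evaluate $m_{S^{a}(V)\otimes S^{l}(V)}(\mu)=\sum_{\mu'+\mu''=\mu}m_{S^a(V)}(\mu')\,m_{S^l(V)}(\mu'')$. A weight $\nu$ occurs in $S^m(V)$ exactly when $m\geq\norma{\nu}$ and $m\equiv\norma{\nu}\pmod2$, with multiplicity $\binom{(m-\norma{\nu})/2+n-1}{n-1}$ (distribute the $(m-\norma{\nu})/2$ cancelling pairs among the $n$ coordinates by stars and bars). Applying this to the $S^l(V)$ factor, let $\mathbf q\in\mathcal Q_n(N)$ record the multiset of absolute values of the entries of $\mu''$, so that $N=\norma{\mu''}$ with $0\le N\le l$, $N\equiv l\pmod2$, and $s_j^{\mathbf q}$ counts coordinates with $|{\cdot}|=j$; this produces the parity constraint on $N$ together with the factor $\binom{(l-N)/2+n-1}{n-1}$, while the $S^a(V)$ factor supplies the second binomial. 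The remaining data $\beta\in\mathcal B^{\mathbf q}$ and $\alpha\in\mathcal A^{\mathbf q}_\beta$ encode the placement of $\mu''$ against $\mu$: for each magnitude $j$ the sign choices give the factor $2^{s_j^{\mathbf q}-\sum_t\beta_t^j}$, the $\beta_t^j$ record how many coordinates of $\mu$ of a prescribed absolute value absorb an entry of $\mu''$ of magnitude $j$, and the $\alpha_t^j\leq\beta_t^j$ record whether those signs reinforce or oppose, which is exactly what shifts $\norma{\mu'}=\norma{\mu-\mu''}$ and hence enters the second binomial through $\sum_{j}\sum_i(j+1-i)\alpha_i^j$. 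Counting these placements among the coordinate classes of $\mu$, whose sizes are $\contador_0,\contador_1,\dots$, yields the nested product of binomial coefficients.

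The main obstacle is precisely this last step: proving that the convolution above equals, term by term, the stated quadruple sum over $\mathbf q,\beta,\alpha$ with exactly those binomial arguments. The difficulty is entirely combinatorial, namely to partition the ways the two symmetric-power factors jointly realize each prescribed coordinate $a_i$, to track the sign interference that governs $\norma{\mu'}$, and to check that the shifted indices $n-\sum_{t<j}\contador_t-\sum\beta$, $\contador_0-\sum(\cdots)$ and $\contador_{j-i+1}-\sum\beta$ allocate coordinates without conflict across distinct magnitudes. I would organize this as a single explicit bijection, or equivalently as a generating-function identity in the variables $x_i^{\pm1}$, and then verify the result against Freudenthal's recursion for small $n,k,l$ as in Table~\ref{tabla:comparison}. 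Types $\tipo B_n$ and $\tipo D_n$ follow the same scheme with the orthogonal Jacobi--Trudi identity in place of the symplectic one; there the extra odd/half contributions replace the parity condition on $N$ by the floor functions appearing in $B_n$.
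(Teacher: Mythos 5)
Your overall strategy coincides with the paper's: the identity $\op{ch}\pi_{k\varepsilon_1+l\varepsilon_2}=h_kh_l-h_{k+1}h_{l-1}-h_{k-1}h_{l-1}+h_kh_{l-2}$ is exactly Lemma~\ref{lem:virtualring(tau)} (the paper derives it by telescoping the Koike--Terada fusion rule for $\pi_{k\varepsilon_1}\otimes\pi_{l\varepsilon_1}$ rather than via the symplectic Jacobi--Trudi/Littlewood restriction, but the two derivations are interchangeable), the formula $m_{S^m(V)}(\nu)=\binom{(m-\norma{\nu})/2+n-1}{n-1}$ is Lemma~\ref{lem:extremecases}, and the reduction to the convolution $\sum_{\eta}m_{\pi_{k\varepsilon_1}}(\mu-\eta)m_{\pi_{l\varepsilon_1}}(\eta)$ with $\eta$ ranging over the weights of $\pi_{l\varepsilon_1}$, stratified by the magnitude profile $\mathbf q$ and the placement/sign data $(\beta,\alpha)$, is precisely the paper's partition \eqref{eq:partition} into the cells $\mathcal{P}^{\mathbf q}_{\beta,\alpha}$ of \eqref{eq:setP}. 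So there is no divergence of method to report.

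The problem is that you stop exactly where the theorem's content begins. Everything up to the stratification is routine; the statement being proved \emph{is} the assertion that (a) $\norma{\mu-\eta}$ is constant on each cell $\mathcal{P}^{\mathbf q}_{\beta,\alpha}$ and equals $k-2\bigl(r+\sum_{j=1}^N\sum_{i=1}^j(j+1-i)\alpha^j_i-(l+N)/2\bigr)$, which is what justifies the argument of the second binomial coefficient, and (b) $\#\mathcal{P}^{\mathbf q}_{\beta,\alpha}$ equals the nested product $\prod_{j=1}^N\bigl(2^{s_j^{\mathbf q}-\sum_i\beta^j_i}\binom{\beta^j_1}{\alpha^j_1}\binom{n-\sum_{t<j}\contador_t-\sum_{r>j}\sum_s\beta^r_s}{\beta^j_1}\cdots\bigr)$ with precisely those shifted upper indices (the shifts account for coordinates already consumed by larger magnitudes, and getting them right is the delicate point). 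You explicitly defer both of these as ``the main obstacle,'' propose to find ``a single explicit bijection,'' and offer numerical verification against Freudenthal's recursion for small $n,k,l$ as a check. A finite check is not a proof, and without (a) and (b) carried out --- which is what the paper does after \eqref{eq:multiptensor3}, first by the displayed computation of $\norma{\mu-\eta}$ and then by the chain of binomial coefficients for $\#\mathcal{P}^{\mathbf q}_{\beta,\alpha}$ --- the proposal establishes only the (standard) reduction, not the formula. You also omit the reduction to dominant $\mu$ via Weyl invariance, though that is minor since your bookkeeping is already phrased in terms of the $|a_i|$.
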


\begin{theorem}\label{thmDn:mulip_bivar} \textup{(Type $\tipo D_n$)}
 Let $\mathfrak g=\so(2n,\C)$ for some $n\geq3$. Let $k\geq l\geq 0$ integers and $\mu\in\Z^n$.
If $r(\mu):=(k+l-\norma{\mu})/2$ is not a non-negative integer, then $m_{\pi_{k\varepsilon_1+l\varepsilon_{2}}}(\mu)=0$, and otherwise
 \begin{align*}
m_{\pi_{k\varepsilon_1+l\varepsilon_{2}}}(\mu)
=&\quad
   D_n(l,r(\mu),\contador_0(\mu), \dots,\contador_{l-1}(\mu))\\
&- D_n(l-1,r(\mu),\contador_0(\mu), \dots,\contador_{l-1}(\mu))\\
&- D_n(l-1,r(\mu)-1,\contador_0(\mu), \dots,\contador_{l-1}(\mu))\\
&+ D_n(l-2,r(\mu)-1,\contador_0(\mu), \dots,\contador_{l-1}(\mu)),
 \end{align*}
where
 \begin{align*}
  D_n(l,r,\contador_0,\dots,\contador_{l-1})= & \sum_{0\leq N\leq l,\atop N \equiv l(\operatorname{mod} 2)}\; \sum_{\mathbf{q}\in\mathcal{Q}_n(N)}\;  \sum_{\beta\in \mathcal{B}^\mathbf{q}} \; \sum_{\alpha\in \mathcal{A}^\mathbf{q}_{\beta}}
  \binom{(l-N)/2+n-2}{n-2} \\
  &\binom{r-(l+N)/2 +\sum_{j=1}^N\sum_{i=1}^j (j+1-i)\alpha_i^j+n-2}{n-2} \\
  & \prod_{j=1}^{N} \left( 2^{s_j^{\mathbf{q}}-\sum_{i=1}^j \beta^j_i} \binom{\beta_1^j}{\alpha_1^j} \binom{n-\sum_{t=0}^{j-1}\contador_t -\sum_{r=j+1}^N\sum_{s=1}^{r-j+1} \beta_s^r}{\beta_1^j}  \right. \\
  & \left. \binom{\contador_0-\sum_{h=j+1}^{N}(s^{\mathbf{q}}_h-\sum_{s=1}^{h}\beta_s^h)}{s^{\mathbf{q}}_j-\sum_{t=1}^{j}\beta_t^j}  \prod_{i=2}^j \binom{\contador_{j-i+1} -\sum_{t=1}^{N-j}\beta_{i+1}^{j+t}}{\beta_i^j} \binom{\beta_i^j}{\alpha_i^j}\right).
 \end{align*}
\end{theorem}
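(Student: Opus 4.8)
The plan is to prove the type $\tipo D_n$ formula by the same method as for types $\tipo B_n$ and $\tipo C_n$ (Theorems~\ref{thmBn:mulip_bivar} and~\ref{thmCn:mulip_bivar}), keeping careful track of the arithmetic features special to $\tipo D_n$. First I record two harmless reductions. Since the multiplicity of any weight in $P(\mathfrak g)\smallsetminus\Z^n$ vanishes (Remark~\ref{rem:not-G-integral}), I take $\mu\in\Z^n$. Moreover $m_{\pi_{k\varepsilon_1+l\varepsilon_2}}(\mu)$ is invariant under the Weyl group of $\tipo D_n$ (permutations and even sign changes of the coordinates), while $\norma\mu$ and the counts $\contador_t(\mu)$ are invariant under all signed permutations; hence both sides depend only on the stated invariants and I may assume $\mu=\sum_i a_i\varepsilon_i$ is dominant, i.e.\ $a_1\ge\dots\ge a_{n-1}\ge|a_n|\ge0$.

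The heart of the argument is a character identity. Let $\omega_1=\varepsilon_1$ and set $G_{k,l}:=\pi_{k\omega_1}\otimes\pi_{l\omega_1}$. The tensor product of two one-row representations decomposes, with each summand of multiplicity one, as
\[
G_{k,l}\;=\;\bigoplus_{m,p\ge0}\pi_{(k+m-p)\varepsilon_1+(l-m-p)\varepsilon_2},
\]
the terms with a non-dominant or out-of-range index being absent. Inverting this relation in the representation ring (it is the operator $(1-R)(1-P)$, where $R\colon(k,l)\mapsto(k+1,l-1)$ comes from the $\tipo A$/Pieri part and $P\colon(k,l)\mapsto(k-1,l-1)$ from the orthogonal contraction) gives
\[
\pi_{k\varepsilon_1+l\varepsilon_2}\;=\;G_{k,l}-G_{k+1,l-1}-G_{k-1,l-1}+G_{k,l-2}.
\]
Since $r(\mu)$ depends on the highest weight only through $k+l$ and the first argument of $D_n$ is the coefficient of $\omega_2$, the four modules $G_{k,l},G_{k+1,l-1},G_{k-1,l-1},G_{k,l-2}$ correspond exactly to the argument pairs $(l,r)$, $(l-1,r)$, $(l-1,r-1)$, $(l-2,r-1)$, so the whole theorem reduces to the single identity
\[
D_n\bigl(l,r,\contador_0,\dots,\contador_{l-1}\bigr)=m_{G_{k,l}}(\mu).
\]
The first task is thus to establish the displayed decomposition of $G_{k,l}$ for $\so(2n,\C)$, checking multiplicity one and the precise index range also for small $n$.

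It remains to compute $m_{G_{k,l}}(\mu)$, and this is where the combinatorics is produced. The multiplicity is the convolution
\[
m_{G_{k,l}}(\mu)=\sum_{\mu'+\mu''=\mu} m_{\pi_{k\omega_1}}(\mu')\,m_{\pi_{l\omega_1}}(\mu''),
\]
and each factor is elementary: because $\pi_{m\omega_1}$ is the space of degree-$m$ harmonics, i.e.\ $\operatorname{Sym}^m V\ominus\operatorname{Sym}^{m-2}V$ for the standard module $V=\C^{2n}$ with weights $\pm\varepsilon_i$, one gets $m_{\pi_{m\omega_1}}(\nu)=\binom{(m-\norma\nu)/2+n-2}{n-2}$ when $m-\norma\nu$ is a non-negative even integer and $0$ otherwise. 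The subtraction of $\operatorname{Sym}^{m-2}V$ is precisely what turns the $n-1$ of types $\tipo B_n,\tipo C_n$ into the $n-2$ of type $\tipo D_n$, and the absence of the roots $\varepsilon_i$ and $2\varepsilon_i$ forces $m-\norma\nu$ to be even, which globally produces the parity condition $N\equiv l\pmod2$. In the convolution the factor $\pi_{l\omega_1}$ contributes only weights $\mu''$ with $\norma{\mu''}\le l$; organizing the sum by the integer $N=\norma{\mu''}$ and the partition $\mathbf q\in\mathcal Q_n(N)$ recording the multiset $\{|a_i''|\}$, then by $\beta\in\mathcal B^{\mathbf q}$ and $\alpha\in\mathcal A^{\mathbf q}_\beta$ recording the sign choices and the overlaps of the supports of $\mu''$ and $\mu$, yields: the first outer binomial $\binom{(l-N)/2+n-2}{n-2}$ from the $\pi_{l\omega_1}$-factor; the second outer binomial $\binom{r-(l+N)/2+\sum_{j=1}^{N}\sum_{i=1}^{j}(j+1-i)\alpha_i^j+n-2}{n-2}$ from the $\pi_{k\omega_1}$-factor evaluated at $\mu'=\mu-\mu''$ (the $\alpha$-sum being the correction to $\norma{\mu'}$ caused by sign cancellations); the powers $2^{s_j^{\mathbf q}-\sum_{i=1}^{j}\beta_i^j}$ from the free signs; and the nested product of binomials from distributing the prescribed numbers $\contador_t(\mu)$ of coordinates among the parts of $\mathbf q$.

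I expect the main obstacle to be exactly this last bookkeeping: showing that the interlocking ranges of $\mathbf q,\beta,\alpha$ together with the nested binomial arguments (with their many cumulative subtractions) enumerate every admissible pair $(\mu',\mu'')$ with $\mu'+\mu''=\mu$ once and only once, and with the correct handling of the distinguished coordinate $a_n$ whose sign is constrained in type $\tipo D_n$. I would keep this under control by proving $D_n(l,\cdot)=m_{G_{k,l}}(\cdot)$ by induction on $l$ via the same four-term relation applied to the factor $\pi_{l\omega_1}$, so that the delicate binomial identities need only be verified one value $j$ (one part size of $\mathbf q$) at a time; the convention $\binom{b}{a}=0$ for $a<0$ or $b<a$ then absorbs all boundary cases (small $l$, and weights $\mu$ on the boundary of the weight polytope) without separate arguments.
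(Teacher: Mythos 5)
Your proposal follows essentially the same route as the paper: the four-term inversion $\pi_{k\varepsilon_1+l\varepsilon_2}=\tau_{k,l}-\tau_{k+1,l-1}-\tau_{k-1,l-1}+\tau_{k,l-2}$ of the Koike--Terada fusion rule (the paper's Lemma~\ref{lem:virtualring(tau)}), the convolution formula for the weight multiplicities of a tensor product, the closed expression $\binom{(m-\norma{\nu})/2+n-2}{n-2}$ for the one-row factors, and the stratification of the weights of $\pi_{l\varepsilon_1}$ by $N$, $\mathbf q$, $\beta$, $\alpha$ exactly as in the sets $\mathcal{P}^{\mathbf q}_{\beta,\alpha}$ of the paper's proof. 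The only divergence is tactical: you propose an induction on $l$ to verify the final binomial bookkeeping, where the paper computes $\#\mathcal{P}^{\mathbf q}_{\beta,\alpha}$ directly; this does not change the substance of the argument.
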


\begin{remark}\label{rem:not-G-integral}
For all types considered, it turns out that $m_{\pi_{k\varepsilon_1+l\varepsilon_{2}}}(\mu) =0$ for all $\mu\in P(\mathfrak g)\smallsetminus\Z^n$.
 Indeed, the subset $\Z^n$ coincides with the set of $G$-integral weights, where $G$ is the only compact linear group $G$ whose Lie algebra is a compact real form in $\mathfrak g$.
 We have that $G$ is isomorphic to $\SO(2n+1)$, $\Sp(n)$ and $\SO(2n)$ for types $\tipo B_n$, $\tipo C_n$ and $\tipo D_n$ respectively.
 Since $k\varepsilon_1+l\varepsilon_2\in\Z^n$, the representation $\pi_{k\varepsilon_1+l\varepsilon_{2}}$ descends to a representation of $G$, and consequently their weights are in $\Z^n$ (see \cite[Lem.~5.106]{Knapp-book-beyond}).
\end{remark}

\subsection{Proofs}
This subsection contains a unified proof of Theorems~\ref{thmBn:mulip_bivar}--\ref{thmDn:mulip_bivar}.
We first need two lemmas.
The first one gives well-known closed explicit formulas for the weight multiplicities of representations having highest weights of the form $k\varepsilon_{1}$ for $k$ a non-negative integer.
A proof can be found in \cite[Lemmas~3.2, 4.3, 5.3]{LR-fundstring}.
The second lemma is the first step to prove the theorems.

\begin{lemma}\label{lem:extremecases}
Let $\mathfrak g$ be a complex Lie algebra of type $\tipo B_n$, $\tipo C_n$ or $\tipo D_n$ for some $n\geq2$. Let $k\geq0$ integer and $\mu\in \Z^n$. Then
 \begin{align}
   m_{\pi_{k\varepsilon_1}}(\mu) &=
   \tbinom{\lfloor r(\mu)\rfloor +n-1}{n-1}  \quad \text{ where } r(\mu)= \tfrac{k-\norma{\mu}}{2},
&&\text{for $\mathfrak g$ of type $\tipo B_n$,}
   \label{eqBn:multip(k)} \\
  m_{\pi_{k\varepsilon_1}}(\mu) &=
  \begin{cases}
   \binom{r(\mu)+n-1}{n-1} & \text{ if }\, r(\mu):=\frac{k-\norma{\mu}}{2}\in \N_0,\\
   0 & \text{ otherwise,}
  \end{cases}
&&\text{for $\mathfrak g$ of type $\tipo C_n$,}
 \label{eqCn:multip(k)} \\
 m_{\pi_{k\varepsilon_1}}(\mu) &=
 \begin{cases}
  \binom{r(\mu)+n-2}{n-2} & \text{ if }\, r(\mu):=\frac{k-\norma{\mu}}{2} \in\N_0,\\
  0 & \text{ otherwise,}
 \end{cases}
&&\text{for $\mathfrak g$ of type $\tipo D_n$.}
 \label{eqDn:multip(k)}
\end{align}
\end{lemma}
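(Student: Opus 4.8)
The plan is to realize each $\pi_{k\varepsilon_1}$ inside a symmetric power of the standard representation $V$ of $\mathfrak g$ and to reduce the computation to an elementary count of monomials. Recall that for types $\tipo C_n$ and $\tipo D_n$ the standard representation is $V\cong\C^{2n}$ with weights $\pm\varepsilon_i$ ($1\le i\le n$), while for type $\tipo B_n$ it is $V\cong\C^{2n+1}$ with the same nonzero weights together with the zero weight of multiplicity one. A monomial basis of $\op{Sym}^k(V)$ is indexed by the exponents $(p_i,q_i)$ attached to the weight vectors of weight $\varepsilon_i$ and $-\varepsilon_i$ (and, in type $\tipo B_n$, an extra exponent $c$ for the zero weight), subject to $\sum_i(p_i+q_i)=k$, respectively $\sum_i(p_i+q_i)+c=k$. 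The weight of such a monomial is $\sum_i(p_i-q_i)\varepsilon_i$.

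Fix $\mu=\sum_i a_i\varepsilon_i\in\Z^n$. Writing $p_i-q_i=a_i$ and setting $t_i=\min(p_i,q_i)\ge0$, one has $p_i+q_i=|a_i|+2t_i$, so the monomials of weight $\mu$ are parametrized by $(t_1,\dots,t_n)$ with $t_i\ge0$ satisfying $2\sum_i t_i=k-\norma{\mu}$ in types $\tipo C_n,\tipo D_n$, and by $(t_1,\dots,t_n,c)$ with $2\sum_i t_i+c=k-\norma{\mu}$ in type $\tipo B_n$. A stars-and-bars count then gives $m_{\op{Sym}^k(V)}(\mu)=\binom{r(\mu)+n-1}{n-1}$ when $r(\mu)=(k-\norma{\mu})/2\in\N_0$ (and $0$ otherwise) for types $\tipo C_n,\tipo D_n$; in type $\tipo B_n$, summing $\binom{T+n-1}{n-1}$ over the admissible values $0\le T=\sum_i t_i\le\lfloor r(\mu)\rfloor$ via the hockey-stick identity yields $m_{\op{Sym}^k(V)}(\mu)=\binom{\lfloor r(\mu)\rfloor+n}{n}$.

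For type $\tipo C_n$ this already finishes the argument: the symplectic form lies in $\wedge^2V$, so $\op{Sym}^2(V)$ carries no invariant, $\op{Sym}^k(V)$ is irreducible with highest weight $k\varepsilon_1$, and hence $\pi_{k\varepsilon_1}=\op{Sym}^k(V)$, giving \eqref{eqCn:multip(k)}. For the orthogonal types I would instead invoke the classical harmonic decomposition induced by the invariant quadratic form, namely $\op{Sym}^k(V)\cong\bigoplus_{0\le 2j\le k}\pi_{(k-2j)\varepsilon_1}$, where each summand is the space of degree-$(k-2j)$ harmonic polynomials. Telescoping this decomposition gives the key identity $m_{\pi_{k\varepsilon_1}}(\mu)=m_{\op{Sym}^k(V)}(\mu)-m_{\op{Sym}^{k-2}(V)}(\mu)$. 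Substituting the counts above and simplifying by Pascal's rule yields $\binom{\lfloor r(\mu)\rfloor+n}{n}-\binom{\lfloor r(\mu)\rfloor+n-1}{n}=\binom{\lfloor r(\mu)\rfloor+n-1}{n-1}$ for type $\tipo B_n$, establishing \eqref{eqBn:multip(k)}, and $\binom{r(\mu)+n-1}{n-1}-\binom{r(\mu)+n-2}{n-1}=\binom{r(\mu)+n-2}{n-2}$ for type $\tipo D_n$, establishing \eqref{eqDn:multip(k)}.

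The only genuinely nontrivial input is the decomposition of $\op{Sym}^k(V)$ into irreducibles, so the main obstacle is justifying it cleanly rather than the bookkeeping. I expect to use the standard fact that, for the orthogonal Lie algebras, contraction against the invariant quadratic form gives a surjection $\op{Sym}^k(V)\twoheadrightarrow\op{Sym}^{k-2}(V)$ whose kernel is the irreducible representation $\pi_{k\varepsilon_1}$ of degree-$k$ harmonics, together with the corresponding irreducibility of $\op{Sym}^k(V)$ in the symplectic case. For type $\tipo D_n$ one must additionally note that $k\varepsilon_1$ avoids the fork of the Dynkin diagram, so the harmonic space remains irreducible under $\so(2n,\C)$ and not merely under $\Ot(2n,\C)$. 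All of these are standard and may simply be cited, after which the remainder is the elementary monomial count and the binomial simplification above.
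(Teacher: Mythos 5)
Your argument is correct, but it is worth noting that the paper does not actually prove this lemma at all: it simply cites Lemmas~3.2, 4.3 and 5.3 of the authors' earlier article \cite{LR-fundstring}, describing the formulas as well known. What you supply is therefore a genuine, self-contained proof where the paper offers only a reference. Your route --- realizing $\pi_{k\varepsilon_1}$ via $\op{Sym}^k(V)$ for the standard representation $V$, counting monomials of a given weight by the substitution $t_i=\min(p_i,q_i)$, and then either invoking irreducibility of $\op{Sym}^k(\C^{2n})$ in the symplectic case or telescoping the harmonic decomposition $\op{Sym}^k(V)\cong\bigoplus_{0\le 2j\le k}\pi_{(k-2j)\varepsilon_1}$ in the orthogonal cases --- is the standard one, and all the steps check out: the stars-and-bars count gives $\binom{r+n-1}{n-1}$ in types $\tipo C_n$, $\tipo D_n$ and $\binom{\lfloor r\rfloor+n}{n}$ in type $\tipo B_n$ via the hockey-stick identity, and the subtraction $m_{\op{Sym}^k(V)}(\mu)-m_{\op{Sym}^{k-2}(V)}(\mu)$ reduces to the stated binomials by Pascal's rule (the convention $\binom{b}{a}=0$ for $b<a$ correctly handles the boundary case $r=0$, where the subtracted term vanishes). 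The only inputs you do not verify are the classical facts about irreducibility of $\op{Sym}^k$ of the symplectic standard representation and of the spaces of harmonics for $\so(m,\C)$ with $m\ge 3$; these are standard and citable (e.g.\ from \cite{FultonHarris-book}), and your remark that $k\varepsilon_1$ stays away from the fork of the $\tipo D_n$ diagram correctly addresses the one point where irreducibility under $\SO$ versus $\Ot$ could be an issue. In short: the proposal is a valid and complete proof of a statement the paper leaves to an external reference.
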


\begin{lemma}\label{lem:virtualring(tau)}
 Let $\mathfrak g$ be a classical Lie algebra of type $\tipo B_n$, $\tipo C_n$ or $\tipo D_n$. For integers $k\geq l\geq0$ write $\tau_{k,l}=\pi_{k\varepsilon_{1}}\otimes \pi_{l\varepsilon_{1}}$.
 Then, in the virtual ring of representations of $\mathfrak g$, we have that
 $$
 \pi_{k\varepsilon_{1}+l\varepsilon_{2}}\simeq
 \tau_{k,l}- \tau_{k+1,l-1}- \tau_{k-1,l-1}+ \tau_{k,l-2}.
 $$
\end{lemma}

\begin{proof}
 We have the fusion rule (see for instance \cite[page 510, example (2)]{KoikeTerada87})
$$
 \tau_{k,l}=
 \pi_{k\varepsilon_{1}}\otimes \pi_{l\varepsilon_{1}}\simeq \bigoplus_{j=0}^{l}\bigoplus_{i=0}^{j} \pi_{(k+l-j-i)\varepsilon_1+(j-i)\varepsilon_2}.
$$
 As an immediate consequence we obtain
 \begin{align*}
  \tau_{k,l}- \tau_{k+1,l-1}=&
  \sum_{j=0}^{l}\sum_{i=0}^{j} \pi_{(k+l-j-i)\varepsilon_1+(j-i)\varepsilon_2} - \sum_{j=0}^{l-1}\sum_{i=0}^{j} \pi_{(k+l-j-i)\varepsilon_1+(j-i)\varepsilon_2}
  = \sum_{i=0}^{l} \pi_{(k-i)\varepsilon_1+(l-i)\varepsilon_2},
\\
  \tau_{k-1,l-1}- \tau_{k,l-2} =&
  \sum_{j=0}^{l-1}\sum_{i=0}^{j}
  \pi_{(k+l-j-i-2)\varepsilon_1+(j-i)\varepsilon_2}-
  \sum_{j=0}^{l-2}\sum_{i=0}^{j} \pi_{(k+l-j-i-2)\varepsilon_1+(j-i)\varepsilon_2}\\
  =& \sum_{i=0}^{l-1} \pi_{(k+1-i)\varepsilon_1+(l-1-i)\varepsilon_2}
  = \sum_{i=1}^{l} \pi_{(k-i)\varepsilon_1+(l-i)\varepsilon_2}.
 \end{align*}
Subtracting the previous identities we obtain the desired formula.
\end{proof}

\begin{proof}[Proofs of Theorems~\ref{thmBn:mulip_bivar}--\ref{thmDn:mulip_bivar}]
Without loss of generality we can assume that $\mu \in \Z^n$ is dominant  since the Weyl group preserves weight multiplicities.
Recall that $\tau_{k,l}= \pi_{k \varepsilon_{1}} \otimes \pi_{l \varepsilon_{1}}$ for $k\geq l\geq 0$ integers.
Since
\begin{equation}\label{eq:mult-pi(k,l)=virtualring}
m_{\pi_{k\varepsilon_{1}+l\varepsilon_{2}}}(\mu) =
m_{\tau_{k,l}}(\mu)- m_{\tau_{k+1,l-1}}(\mu)- m_{\tau_{k-1,l-1}}(\mu)+ m_{\tau_{k,l-2}}(\mu)
\end{equation}
by Lemma~\ref{lem:virtualring(tau)}, we are left with the task of showing that
\begin{equation}\label{eq:mult-tau=BnCnDn}
m_{\tau_{k,l}}(\mu) =
\begin{cases}
B_n(l,r(\mu),\contador_0(\mu),\dots,\contador_{l-1}(\mu))
 &\text{ for $\mathfrak g$ of type $\tipo B_n$,}\\
C_n(l,r(\mu),\contador_0(\mu),\dots,\contador_{l-1}(\mu))
 &\text{ for $\mathfrak g$ of type $\tipo C_n$,}\\
D_n(l,r(\mu),\contador_0(\mu),\dots,\contador_{l-1}(\mu))
 &\text{ for $\mathfrak g$ of type $\tipo D_n$.}
\end{cases}
\end{equation}

It is well known that (see  \cite[Excercise~V.14]{Knapp-book-beyond})
\begin{align}\label{eq:multiptensor2}
 m_{\tau_{k,l}}(\mu)=\sum_{\eta} m_{\pi_{k\varepsilon_{1}}}(\mu-\eta)\, m_{\pi_{l\varepsilon_{1}}}(\eta),
\end{align}
where the sum is restricted to $\mathcal{P}(\pi_{l\varepsilon_{1}})$, the set of weights of $\pi_{l\varepsilon_{1}}$.
By Lemma~\ref{lem:extremecases}, the weights of $\pi_{l\varepsilon_{1}}$ are those $\eta$ such that $l-\norma{\eta}\in \N_0$ for type $\tipo B_n$ and $l-\norma{\eta}\in2\N_0$ for types $\tipo C_n$ and $\tipo D_n$.
In order to calculate $\norma{\mu-\eta}$, to determine $m_{k\varepsilon_{1}}(\mu-\eta)$,
we make a convenient partition of $\mathcal{P}(\pi_{l\varepsilon_{1}})$.

We write $\contador_t=\contador_t(\mu)$ for all $0\leq t\leq l-1$.
For $0\leq N\leq l$ integer, $\mathbf q=(q_1,\dots,q_n) \in \mathcal{Q}_n(N)$, $\beta=(\beta^1_1,\beta^2_1,\dots,\beta^N_N)\in\mathcal B^{\mathbf q}$ and $\alpha=(\alpha^1_1,\alpha^2_1,\dots,\alpha^N_N)\in\mathcal A^{\mathbf q}_\beta$ (see Section~\ref{sec:notation}
for notation), we set
\begin{align}\label{eq:setP}
\mathcal{P}^{\mathbf q}_{\beta,\alpha}= \left\{\sum_{j=1}^N \sum_{k=0}^{s_j^{\mathbf q}} b_{j,k}\varepsilon_{i_k^j}:
\begin{array}{l}
 i^j_k\neq i^{j'}_{s'} \mbox{ if } (j,k)\neq(j',k'); \, b_{j,k}=\pm j\;\text{ for all $k$};\\
 i^j_1<\dots<i^j_{\beta^j_1}\leq n-\sum\limits_{t=0}^{j-1} \contador_t < i^j_{\beta^j_1+1}<\dots <i^j_{\beta^j_1+\beta^j_2}  \\
 \leq  n-\sum\limits_{t=0}^{j-2} \contador_t
 < \dots \leq n-\contador_0 < i^j_{\sum_{i=1}^j \beta^j_i+1}  <\dots < i^j_{s^{\mathbf q}_j, \,}
 \\
 \text{for all } 1\leq j\leq N;\\
 \#\{k: \sum\limits_{t=1}^{h-1} \beta^j_{t}+1\leq k\leq \sum\limits_{t=1}^{h} \beta^j_{t}, \, b_{j,k}=j\}= \alpha^j_h
 \end{array}
 \right\}.
\end{align}
We now list some properties shared by all the elements in $\mathcal{P}^{\mathbf q}_{\beta,\alpha}$.
Let $\eta =\sum_{i=1}^n c_i\varepsilon_i\in \mathcal{P}^{\mathbf q}_{\beta,\alpha}$.
The multiset (i.e.\ a set where an element can be repeated) given by the elements $|c_1|,\dots,|c_n|$ coincides with the multiset of elements $q_1,\dots,q_n$, thus $\norma{\eta}=N$.
For a fixed $1\leq j\leq N$, the number of entries equal to $\pm j$ is $s^{\mathbf q}_j$ located as follows:
we divide the integral interval $[1,n]$ in $(j+1)$-blocks as the identity $n=(n-\sum_{t=0}^{j-1}\contador_t )+ \contador_{j-1}+\contador_{j-2}+\dots+\contador_1+\contador_0$ suggests, that is, the first block has the first $(n-\sum_{r=0}^{j-1}\contador_r )$ integers, the second block has the next $\contador_{j-1}$ elements, the third one has the next $\contador_{j-2}$ elements, and so on.
For each $1\leq t\leq j$, there are $\beta_t^j$ entries in the $t$th block equal to $\pm j$, $\alpha_t^j$ of them are positive.
In the last block there are $s^{\mathbf q}_j-\sum_{t=1}^{j-1} \beta_t^j$ entries equal to $\pm j$.

As a consequence of the previous paragraph, we have partitioned the set of weights of $\pi_{l \varepsilon_{1}}$ as
\begin{equation}\label{eq:partition}
\mathcal{P}(\pi_{l\varepsilon_{1}})= \bigcup_N \, \bigcup_{\mathbf q\in\mathcal{Q}_n(N)}\; \bigcup_{\beta\in\mathcal B^{\mathbf q}}\; \bigcup_{\alpha\in\mathcal A^{\mathbf q}_\beta} \; \mathcal{P}^{\mathbf q}_{\beta,\alpha}
\end{equation}
where the first union is over $N\in\N_0$ satisfying $l-N\in \N_0$ for type $\tipo B_n$ and $l-N\in 2\N_0$ for types $\tipo C_n$ and $\tipo D_n$.
All the unions are disjoint.

Fix an integer $0\leq N\leq l$, $\mathbf q\in\mathcal{Q}_n(N)$, $\beta\in\mathcal B^{\textbf{q}}$, $\alpha\in\mathcal A^{\textbf{q}}_\beta$ and $\eta\in\mathcal{P}^{\mathbf q}_{\beta,\alpha}$.
One may check that
\begin{align*}
  \norma{\mu-\eta} &=  k+l-2r + \sum_{j=1}^N \sum_{i=1}^j \big( j(\beta^j_i-\alpha^j_i)+(2i-j-2)\alpha^j_i\big) + \sum_{j=1}^N j(s^{\mathbf q}_j-\sum_{i=1}^j \beta^j_i)\\
  &= k+l-2r+ \sum_{j=1}^N \sum_{i=1}^j 2(i-j-1)\alpha^j_i + js^{\mathbf q}_j \\
  &= k+l +N- 2\big(r+ \sum_{j=1}^N \sum_{i=1}^j (j+1-i)\alpha^j_i   \big)\\
  &= k-2\big( r+ \sum_{j=1}^N \sum_{i=1}^j (j+1-i)\alpha^j_i - (l+N)/2\big).
\end{align*}
Since $m_{\pi_{l\varepsilon_{1}}}(\eta)$ and $m_{\pi_{k\varepsilon_{1}}}(\mu-\eta)$ are given in Lemma~\ref{lem:extremecases} in terms of $l-\norma{\eta}$ and $k-\norma{\mu-\eta}$ respectively, it follows that $m_{\pi_{l\varepsilon_{1}}}(\eta)$ and $m_{\pi_{k\varepsilon_{1}}}(\mu-\eta)$ are constant, independent of the choice of $\eta\in \mathcal{P}^{\mathbf q}_{\beta,\alpha}$.

From the above fact, the partition \eqref{eq:partition} and the formula \eqref{eq:multiptensor2} we conclude that
\begin{align}\label{eq:multiptensor3}
 m_{\tau_{k,l}}(\mu) = & \sum_{N}\; \sum_{\mathbf{q}\in\mathcal{Q}_n(N)}\;  \sum_{\beta\in \mathcal{B}^\mathbf{q}} \; \sum_{\alpha\in \mathcal{A}^\mathbf{q}_{\beta}}
 m_{\pi_{k\varepsilon_{1}}} (\mu-\eta_{\beta,\alpha}^{\mathbf{q}}) \, m_{\pi_{l\varepsilon_{1}}} (\eta_{\beta,\alpha}^{\mathbf{q}})\, \# \mathcal{P}^{\mathbf q}_{\beta,\alpha},
\end{align}
where $\eta_{\beta,\alpha}^{\mathbf{q}}$ is any element in $\mathcal{P}^{\mathbf q}_{\beta,\alpha}$, and the first sum is over $N\in\N_0$ satisfying $l-N\in \N_0$ for type $\tipo B_n$ and $l-N\in 2\N_0$ for types $\tipo C_n$ and $\tipo D_n$.

By tedious but straightforward combinatorial arguments we have
\begin{align*}
\#\mathcal{P}^{\mathbf q}_{\beta,\alpha}= &
2^{\sum_{j=1}^N(s_j^{\mathbf q}-\sum_{i=1}^j \beta^j_i)}  \binom{n-\sum_{j=0}^{N-1}\contador_j}{\beta^N_1}\binom{\contador_{N-1}}{\beta^N_2}\cdots \binom{\contador_1}{\beta^N_N}\binom{\contador_0}{s^{\mathbf q}_N-\sum_{j=1}^N\beta_j^N}\\
& \binom{n-\sum_{j=0}^{N-2}\contador_j-\beta_1^N-\beta_2^N}{\beta^{N-1}_1}\binom{\contador_{N-2}-\beta^N_3}{\beta^{N-1}_2}\cdots \binom{\contador_1-\beta^N_N}{\beta^{N-1}_{N-1}}\binom{\contador_0-(s^{\mathbf q}_N-\sum_{j=1}^N\beta_j^N)}{s^{\mathbf q}_{N-1}-\sum_{j=1}^{N-1}\beta_j^{N-1}} \\
& \cdots \binom{n-\contador_0-\sum_{j=2}^N\sum_{i=1}^j\beta_i^j}{\beta^1_1} \binom{\contador_0-\sum_{j=2}^N(s^{\mathbf q}_j-\sum_{i=1}^j \beta_i^j)}{s^{\mathbf q}_{1}-\beta_1^1} \binom{\beta^1_1}{\alpha^1_1} \binom{\beta^2_1}{\alpha^2_1}\cdots \binom{\beta^N_N}{\alpha^N_N}\\
= &
\prod_{j=1}^{N} \left(2^{s_j^{\mathbf q}-\sum_{i=1}^j \beta^j_i} \binom{n-\sum_{t=0}^{j-1}\contador_t -\sum_{r=j+1}^N\sum_{s=1}^{r-j+1} \beta_s^r}{\beta_1^j}
\right.\\
&\quad \quad\left. \binom{\contador_0-\sum_{r=j+1}^{N}(s^{\mathbf q}_r-\sum_{s=1}^{r}\beta_s^r)}{s^{\mathbf q}_j-\sum_{t=1}^{j}\beta_t^j}
\binom{\beta_1^j}{\alpha_1^j}  \prod_{i=2}^j \binom{\contador_{j-i+1} -\sum_{t=1}^{N-j}\beta_{i+1}^{j+t}}{\beta_i^j} \binom{\beta_i^j}{\alpha_i^j} \right).
\end{align*}

Replacing in \eqref{eq:multiptensor3} the values of $m_{\pi_{k\varepsilon_{1}}}(\mu-\eta_{\beta,\alpha}^{\mathbf{q}})$ and $m_{\pi_{l\varepsilon_{1}}} (\eta_{\beta,\alpha}^{\mathbf{q}})$ given by Lemma~\ref{lem:extremecases} and $\#\mathcal{P}^{\mathbf q}_{\beta,\alpha}$ by the above expression, we obtain the desired weight multiplicity formula for $\tau_{k,l}$.
According to \eqref{eq:mult-pi(k,l)=virtualring}, the proofs of Theorems~\ref{thmBn:mulip_bivar}, \ref{thmCn:mulip_bivar} and \ref{thmDn:mulip_bivar} are complete.
\end{proof}

\subsection{Computational comparison}\label{subsec:comparison}
We now include a non-serious computational comparison between the weight multiplicity formulas in Theorems~\ref{thmBn:mulip_bivar}--\ref{thmDn:mulip_bivar} and Freudenthal's formula (see for instance \cite[\S22.3]{Humphreys-book}).
We use the open-source mathematical software \texttt{Sage}~\cite{Sage} and its algebraic combinatorics features developed by the \texttt{Sage-Combinat} community~\cite{Sage-Combinat}, which has implemented Freudenthal's formulas.
The source code containing the bivariate algorithm can be found in the public project \cite{Publicbivariate} available in \texttt{CoCalc}. (To see the corresponding hyperlink go to the electronic version of this article.)

The word `non-serious' in the previous paragraph has been added for several reasons that we now explain.
The formulas proved above have been implemented in \texttt{Sage} by the first named author, who lacks computer programming skills.
Thus, their implementations are done poorly and inefficiently.
On the contrary, the \texttt{Sage-Combinat} community programmed Freudenthal's formula in \texttt{Sage} in a very efficient way.
Furthermore, the calculations have been made using an old version of \texttt{Sage}~\cite{Sage} and a slow computer.
	
The implementation of Freudenthal's formula in \texttt{Sage}, called \emph{Freudenthal algorithm} in the sequel, returns all the weights with their corresponding multiplicities.
We suspect that this tactic is due to a matter of efficiency since Freudenthal's formula is defined recursively.
On the other hand, Theorem~\ref{thmBn:mulip_bivar}--\ref{thmDn:mulip_bivar} compute the multiplicity of a single weight.
Thus, in order to make a fair comparison between them, the bivariate algorithm will also determine the set of weights of $\pi_{k \varepsilon_{1}+l\varepsilon_2}$.
To this end, we first find a subset of $\Z^n$ containing the set of weights of $\pi_{k \varepsilon_{1}+l\varepsilon_2}$, namely, $\{\mu\in\Z^n: \norma{\mu}\leq k+l\}$.
Here is a summary of the algorithm.

\begin{algorithm}[{Bivariate algorithm}]\label{algorithm}\ \\
		\textsc{Input:} $\mathfrak g$ a classical complex Lie algebra of type $\tipo B_n$ or $\tipo C_n$ with $n\geq2$, or $\tipo D_n$ with $n\geq3$, and $k\geq l$ non-negative integers. \\
		\textsc{Output}: the sequence of pairs $[\mu, m_{\pi_{k\varepsilon_{1}+l\varepsilon_{2}}}(\mu)]$, where $\mu$ runs over every weight of the representation $\pi_{k\varepsilon_1+l\varepsilon_2}$ of $\mathfrak g$ and $m_{\pi_{k\varepsilon_{1}+l\varepsilon_{2}}}(\mu)$ is its  multiplicity.
		\begin{enumerate}
			\item Initialize $S$ as an empty list.
			\item Determine the set $P$ of vectors $\mu=(a_1,\dots,a_n) \in\Z^n$ such that $\norma{\mu}\leq k+l$ and $a_1\geq a_2\geq \dots \geq a_n\geq 0$.
			\item Run over all elements $\mu$ in $P$.
			\item Compute $m_{\pi_{k\varepsilon_{1}+l\varepsilon_{2}}}(\mu)$ by Theorems~\ref{thmBn:mulip_bivar}--\ref{thmDn:mulip_bivar}.
			\item In case $m_{\pi_{k\varepsilon_{1}+l\varepsilon_{2}}}(\mu)>0$, determine the orbit of $\mu$ by the group $W_n\simeq \op{Sym}(n)\times \{\pm1\}^n$, which acts by permutations and multiplication by $\pm1$ on its entries.
			\item For each $\nu$ in the above orbit, add in $S$ the entry $[\nu, m_{\pi_{k\varepsilon_{1}+l\varepsilon_{2}}}(\mu)]$.
			\item Return $S$.
		\end{enumerate}
\end{algorithm}

\begin{remark}\label{rem:dominantes_Dn}
Notice that the set of dominant weights for $\pi_{k \varepsilon_{1}+l\varepsilon_2}$ is included in $P$ introduced in (ii) when $\mathfrak g$ is of type $\tipo B_n$ and $\tipo C_n$.
Although this fact is not true for $\mathfrak g$ of type $\tipo D_n$, each remaining element has the form $\bar \mu:=(a_1,\dots,a_{n-1},-a_n)$ for some $\mu=(a_1,\dots,a_n)$ in $P$ with $a_n>0$, and it satisfies $m_{\pi_{k\varepsilon_{1}+l\varepsilon_{2}}}(\bar \mu)=m_{\pi_{k\varepsilon_{1}+l\varepsilon_{2}}}(\mu)$ since $n\geq3$.
Consequently, step (v) obtains all the weights of $\pi_{k \varepsilon_{1}+l\varepsilon_2}$ when $\mathfrak g$ is of type $\tipo B_n$ and $\tipo C_n$ for $n\geq2$ and $\tipo D_n$ for $n\geq3$.
Likewise, the group $W_n$ introduced in (v) coincides with the Weyl group when $\mathfrak g$ is of type $\tipo B_n$ or $\tipo C_n$.
For $\mathfrak g$ of type $\tipo D_n$ and $n\geq3$, the Weyl group is isomorphic to $\op{Sym}(n)\times \{\pm1\}^{n-1}$, thus it is strictly included in $W_n$.
This fact is consistent with the previous comment on the set of dominant weights that is not contained in $P$.
\end{remark}

Table~\ref{tabla:comparison} displays the times (in seconds) required by both, the bivariate and Freudenthal, algorithms for different choices of $n$, $k$ and $l$.
Let us introduce the notation $B(\tipo X_n,k,l)$ for the time required by our implementation in \texttt{Sage} of the bivariate algorithm for $\mathfrak g$ of type $\tipo X_n$ ($=\tipo B_n$, $\tipo C_n$ or $\tipo D_n$) and the irreducible representation of $\mathfrak g$ having highest weight $k\varepsilon_1+l\varepsilon_{2}$.
Similarly, write $F(\tipo X_n,k,l)$ for the corresponding required time for the implementation in \texttt{Sage} of Freudenthal algorithm.
This abuse of notation (the numbers are periods of time not uniquely determined) will be advantageous to express the numerical conclusions.

We now indicate some conclusions evidenced by the numerical experiments.
It is clear that $B(\tipo X_n,k,l)$ is much smaller than $F(\tipo X_n,k,l)$ for coherent small values of $n$, $k$ and $l$.
Furthermore, the function $n\mapsto B(\tipo X_n,k,l)/F(\tipo X_n,k,l)$ seems to be increasing for any fixed choice of $\tipo X$, $k$ and $l$.
Moreover, for $n$ big enough, one would have $B(\tipo X_n,k,l)<F(\tipo X_n,k,l)$.

On one hand, we see that $F(\tipo D_n,k,l) < F(\tipo C_n,k,l) < F(\tipo B_n,k,l)$ and the gaps among them increase when $n$ grows.
The reason is that Freudenthal's formula depends heavily on the root system associated to $\mathfrak g$, which is simpler for type $\tipo D_n$ and more complicated for type $\tipo B_n$.
On the other hand, $B(\tipo C_n,k,l)$ and $B(\tipo D_n,k,l)$ look similar and $B(\tipo B_n,k,l)$ larger.
In this case, the reason is the number of weights.
Roughly speaking, the set of weights of $\pi_{k\varepsilon_{1}+l\varepsilon_{2}}$ is almost equal to $\{\mu\in\Z^n : \norma{\mu}\leq k+l, \norma{\mu}\equiv k+l\pmod 2\}$ for types $\tipo C_n$ and $\tipo D_n$ and to $\{\mu\in\Z^n : \norma{\mu}\leq k+l\}$ for types $\tipo B_n$.
In fact, this is a consequence of $\norma{\alpha}=2$ for every root $\alpha$ in types $\tipo C_n$ and $\tipo D_n$ and $\norma{\alpha}\in\{1,2\}$ for every root $\alpha$ in type $\tipo B_n$.
Summing up, the bivariate algorithm is not sensible to the number of roots in the corresponding root system, but it is sensible to the one-norm of the roots.

Throughout this paragraph fix a type $\tipo X_n$.
The times required by both algorithms depend on $k+l$.
In fact, the set of weights of $\pi_{k\varepsilon_{1}+l\varepsilon_{2}}$ does not vary considerably among the different choices of $k$ and $l$ with $k+l$ fixed.
Likewise, Freudenthal's formula is slightly faster when $l$ grows since the size of the set of weights decreases.
However, the bivariate algorithm strongly depends on $l$.
Indeed, as this algorithm involves partitions of all non-negative integers less than or equal to $l$, its speed reduces when $l$ increases.
In conclusion, fixing the value $m=k+l$, the function $l\mapsto B(\tipo X_n,m-l,l)/F(\tipo X_n,m-l,l)$ attains its minimum when $l$ is as large as possible, that is, when $l=k$ or $l=k-1$ according to the parity of $k+l$.
This situation is exemplified in Table~\ref{tabla:k+lfijo}.

The authors believe that the weight multiplicity formulas in Theorems~\ref{thmBn:mulip_bivar}--\ref{thmDn:mulip_bivar} could be implemented on new versions of \texttt{Sage}.
Bivariate representations are a non-trivial class of irreducible representations, which frequently appear on users' calculations.
Not only the time required by the bivariate algorithm for $n$ large enough is reduced, but there is also a great advantage in the possibility of calculating the multiplicity of a single weight in a very short period of time.
For example, when $\mathfrak g$ is of type $\tipo D_5$, $k=20$ and $l=6$, its implemented program in \texttt{Sage} takes only between 0.40 and 0.65 seconds for each single weight $\mu$.
Furthermore, the efficiency of the algorithm improves significantly when it returns only the multiplicities of dominant weights (i.e.~step (v) is omitted in Algorithm~\ref{algorithm}), which is in general what users really need.
This can be appreciated in the fourth column of Table~\ref{tabla:comparison}, denoted by $\tipo D_n(\star)$.
There, we list the times required by this simplified version of the bivariate algorithm 
for $\mathfrak g$ of type $\tipo D_n$.
Of course, the fact that the bivariate algorithm works only for particular simple complex Lie algebras and bivariate representations is a big disadvantage.

\begin{table}
\caption{Computational comparison between bivariate algorithm and Freudenthal's formula.} \label{tabla:comparison}
\begin{tabular}{ccc|rrrr|rrr}
&&&\multicolumn{4}{c}{time bivariate} & \multicolumn{3}{|c}{time Freudenthal}   \\
$n$&$k$&$l$& \multicolumn{1}{c}{$\tipo B_n$} & \multicolumn{1}{c}{$\tipo C_n$} & \multicolumn{1}{c}{$\tipo D_n$} & \multicolumn{1}{c}{$\tipo D_n$$(\star)$} & \multicolumn{1}{|c}{$\tipo B_n$} & \multicolumn{1}{c}{$\tipo C_n$} & \multicolumn{1}{c}{$\tipo D_n$}  \\ \hline\hline
 2& 5&3&    0.15 &    0.07 &         &         &     0.32 &     0.13 &          \\
 3& 5&3&    0.26 &    0.14 &    0.15 &    0.13 &     3.88 &     1.87 &     1.27 \\
 4& 5&3&    0.46 &    0.35 &    0.28 &    0.19 &    32.58 &    14.90 &    12.05 \\
 5& 5&3&    0.99 &    0.67 &    0.62 &    0.23 &   187.43 &    94.08 &    79.82 \\
 6& 5&3&    2.82 &    1.89 &    1.78 &    0.24 &   876.17 &   527.69 &   451.43 \\
 7& 5&3&    6.94 &    5.22 &    4.74 &    0.27 &  3436.25 &  1898.23 &  1961.54 \\
 8& 5&3&   17.77 &   14.11 &   12.59 &    0.36 &          &          &          \\
 9& 5&3&   43.23 &   35.47 &   32.11 &    0.51 &          &          &          \\
10& 5&3&   97.55 &   87.67 &   78.66 &    0.84 &          &          &          \\ \hline
 2&10&3&    0.29 &    0.13 &         &         &     1.92 &     0.48 &          \\
 3&10&3&    0.82 &    0.49 &    0.45 &    0.37 &    23.68 &    10.23 &     7.85 \\
 4&10&3&    2.09 &    1.16 &    1.22 &    0.58 &   291.63 &   130.93 &   108.61 \\
 5&10&3&    8.30 &    5.06 &    4.84 &    0.80 &  2630.09 &  1193.45 &  1028.42 \\
 6&10&3&   38.42 &   24.86 &   23.90 &    1.16 &          &          &          \\
 7&10&3&  183.73 &  146.06 &  126.82 &    1.96 &          &          &          \\ \hline
 2&50&3&    3.47 &    1.79 &         &         &    78.13 &    28.54 &          \\
 3&50&3&   36.40 &   17.62 &   17.50 &   13.50 &  5146.69 &  2108.21 &  1578.78 \\
 4&50&3&  472.14 &  325.76 &  267.25 &   58.00 &          &          &          \\ \hline
 2& 6&6&    2.20 &    1.32 &         &         &     0.50 &     0.19 &          \\
 3& 6&6&    9.28 &    5.35 &    5.23 &    5.21 &    11.90 &     4.74 &     3.53 \\
 4& 6&6&   19.29 &   11.19 &   11.44 &   11.09 &   157.23 &    67.93 &    54.79 \\
 5& 6&6&   30.81 &   18.05 &   18.04 &   15.78 &  1443.41 &   663.86 &   553.57 \\
 6& 6&6&   53.76 &   32.65 &   32.64 &   19.50 &          &          &          \\ \hline
 2&10&6&    3.58 &    2.04 &         &         &     1.66 &     0.62 &          \\
 3&10&6&   18.14 &    9.86 &    9.91 &    9.77 &    43.50 &    18.34 &    13.59 \\
 4&10&6&   44.69 &   25.20 &   24.96 &   23.98 &   695.71 &   298.50 &   243.70 \\
 5&10&6&   87.71 &   49.55 &   49.36 &   38.89 &  8114.00 &  3571.44 &  2966.84 \\
 6&10&6&  235.76 &  158.26 &  133.51 &   52.88 &          &          &          \\ \hline
 2&20&6&    8.77 &    4.61 &         &         &     8.41 &     3.16 &          \\
 3&20&6&   63.71 &   33.25 &   33.16 &   32.61 &   312.83 &   129.92 &    98.02 \\
 4&20&6&  216.46 &  115.58 &  117.99 &  109.08 &  7486.63 &  3199.29 &  2620.10 \\
 5&20&6&  654.96 &  390.98 &  393.92 &  220.11 &          &          &          \\ \hline
 2&15&9&   40.04 &   22.33 &         &         &     4.93 &     1.81 &          \\
 3&15&9&  390.59 &  209.87 &  209.12 &  208.37 &   191.24 &    78.85 &    59.11 \\
 4&15&9& 1594.63 &  865.10 &  853.83 &  851.50 &  4710.03 &  1908.34 &  1642.50 \\
 5&15&9& 3794.15 & 2112.98 & 2051.99 & 1962.57 & 71389.97 & 32013.22 & 28179.33 \\ \hline
 2&50&9&  231.18 &  116.93 &         &         &    96.16 &    35.20 &          \\
 3&50&9& 4800.55 & 2423.05 & 2553.71 & 2492.15 &  7851.47 &  3117.85 &  2346.84 \\
\end{tabular}

\medskip

Each column shows, for the corresponding algorithm and type $\tipo X$, the required time for returning the set of weights with multiplicities of the representation $\pi_{k\varepsilon_1+l\varepsilon_2}$ of $\mathfrak g$ of type $\tipo X_n$ according to the row.
The column $\tipo D_n$$(\star)$ refers to the version of the bivariate algorithm returning only the dominant weights. 
\end{table}

\begin{table}
	\caption{Comparison among representations of $\mathfrak g$ of type $\tipo D_4$ with $k+l=14$ fixed.} \label{tabla:k+lfijo}
	$
	\begin{array}{c|rrrrrrrr}
	l&\multicolumn{1}{c}{0}&\multicolumn{1}{c}{1}& \multicolumn{1}{c}{2}& \multicolumn{1}{c}{3}& \multicolumn{1}{c}{4}& \multicolumn{1}{c}{5}& \multicolumn{1}{c}{6}& \multicolumn{1}{c}{7} \\ \hline
	B(\tipo D_4,14-l,l) &  1.03&  1.04&  1.09&  1.69&  3.28&  7.45& 17.50& 39.99\\
	F(\tipo D_4,14-l,l) & 152.84& 152.77& 152.60& 151.50& 146.38& 137.41& 124.40& 106.22\\
	\end{array}
	$
\end{table}

\subsection{Closed explicit weight formulas in particular cases}\label{subsec:closedformulas}
The weight multiplicity formulas obtained in Theorems~\ref{thmBn:mulip_bivar}--\ref{thmDn:mulip_bivar} are not closed expressions because they involve a sum over partitions of non-negative integers.
However, in some particular cases (e.g.\ small values of $l$, particular choices of $\mu$) it is possible to write out the partitions, and therefore the formulas become closed expressions.
For example, if $l=0$ then the formulas reduce to the closed explicit expressions in Lemma~\ref{lem:extremecases}.

When $l=1$, only sums over the set of partitions of $0$ or $1$ are involved.
These sets have exactly one element, so the sums disappear.
For example, when $\mathfrak g$ is of type $\tipo D_n$, we get
\begin{align}
m_{\pi_{k\varepsilon_1+\varepsilon_{2}}}(\mu)
=&
D_n(1,r(\mu),\contador_0(\mu))
- D_n(0,r(\mu))
- D_n(0,r(\mu)-1)\\
= &
\sum_{\beta_1^1=0}^1 \; \sum_{\alpha_1^1=0}^{1-\beta_1^1}
\binom{r-1 + \alpha_1^1+n-2}{n-2}  2^{1-\beta^1_1} \binom{\beta_1^1}{\alpha_1^1} \binom{n-\contador_0(\mu) }{\beta_1^1}
\binom{\contador_0(\mu)}{1-\beta_1^1}  \notag\\
&- \binom{r(\mu) +n-2}{n-2} -\binom{r(\mu)-1 +n-2}{n-2} \notag
\end{align}
for every $\mu\in\Z^n$ satisfying that $r(\mu)=(k+1-\norma{\mu})/2$ is a non-negative integer.
Notice that this formula is a particular case of  \cite[Thm.~4.1]{LR-fundstring}.

Similarly, when $l=2$ there are only sums over the set of partitions of $N$ for $N=0,1,2$.
Since $2=2$ and $2=1+1$ are the only partitions of $2$, the corresponding sum splits in two.
We now state the multiplicity formula for $l=2$ and type $\tipo D_n$.
We pick type $\tipo D_n$ for citing purposes.

\begin{corollary}\label{corDn:for(l=2)}
 Let $\mathfrak g=\so(2n,\C)$ for $n\geq3$, let $k\geq2$ integer and let $\mu\in \Z^n$.
 If $r(\mu):=(k+2-\norma{\mu})/2$ is a non-negative integer, then
  \begin{align*}
   m_{\pi_{k\varepsilon_1+2\varepsilon_2}}(\mu)
   &= \binom{r(\mu)+n-4}{n-2}\left(2\contador_0(\mu)(n-1)+\binom{n-\contador_0(\mu)}{2}\right) \\
   &\quad+ \binom{r(\mu)+n-3}{n-2}\left(2\contador_0(\mu)(n-\contador_0(\mu))+ \contador_1(\mu) -n+2\binom{n-\contador_0(\mu)}{2}\right) \\
   &\quad+\binom{r(\mu)+n-2}{n-2}\left(\binom{n-\contador_0(\mu)}{2}-\contador_1(\mu)\right),
  \end{align*}
and $m_{\pi_{k\varepsilon_1+2\varepsilon_2}}(\mu)=0$ otherwise.
\end{corollary}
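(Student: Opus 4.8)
The plan is to specialize Theorem~\ref{thmDn:mulip_bivar} to $l=2$. Writing $r=r(\mu)$ and $\contador_t=\contador_t(\mu)$ for brevity, that theorem gives
\[
m_{\pi_{k\varepsilon_1+2\varepsilon_2}}(\mu)=D_n(2,r,\contador_0,\contador_1)-D_n(1,r,\contador_0,\contador_1)-D_n(1,r-1,\contador_0,\contador_1)+D_n(0,r-1,\contador_0,\contador_1).
\]
Because $l=2$ is small, the inner data of each $D_n$ can be listed by hand. The congruence $N\equiv l\pmod2$ in the definition of $D_n$ restricts the outer sum to $N\in\{0,2\}$ in the first term, to $N=1$ in the two middle terms, and to $N=0$ in the last; and for each such $N$ the set $\mathcal{Q}_n(N)$ of \eqref{eq:def-A(N)} is tiny, namely the single partition for $N=0,1$ and exactly the two partitions $(2,0,\dots,0)$ and $(1,1,0,\dots,0)$ for $N=2$.

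I would then evaluate the four $D_n$-terms one at a time by expanding $\mathcal B^{\mathbf q}$ and $\mathcal A^{\mathbf q}_\beta$ from \eqref{eq:def-C^q}--\eqref{eq:def-C^q_b} and plugging the few admissible tuples $(\beta,\alpha)$ into the explicit product of binomials and powers of $2$. Concretely, $D_n(0,r-1,\dots)$ has only the empty tuple and contributes $\binom{r-1+n-2}{n-2}=\binom{r+n-3}{n-2}$; each $D_n(1,\cdot,\dots)$ runs over $\beta_1^1\in\{0,1\}$ with $\alpha_1^1\le\beta_1^1$, a short sum; and $D_n(2,r,\dots)$ splits as the $N=0$ piece $(n-1)\binom{r+n-3}{n-2}$ plus the two $N=2$ partitions, each contributing a short sum over $\beta,\alpha$. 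Throughout, the quantity $A:=\sum_{j=1}^N\sum_{i=1}^j(j+1-i)\alpha_i^j$ takes only the values $0,1,2$ and shifts the second binomial of $D_n$; after substituting the relevant $l$ and $N$, the three possibilities collapse exactly to the three binomials $\binom{r+n-4}{n-2}$, $\binom{r+n-3}{n-2}$, $\binom{r+n-2}{n-2}$ of the statement, while the remaining binomials and powers of $2$ produce polynomial coefficients in $\contador_0,\contador_1,n$.

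Finally I would collect like terms, grouping the contributions of all four $D_n$-terms according to which of the three binomials they multiply, and simplify each coefficient—using the convention $\binom{b}{a}=0$ for out-of-range arguments together with elementary identities such as $\binom{n-\contador_0}{2}=\tfrac12(n-\contador_0)(n-\contador_0-1)$—to match the three parenthesized expressions in the corollary; the vanishing clause for $r$ not a non-negative integer is inherited directly from Theorem~\ref{thmDn:mulip_bivar}. The main obstacle is not conceptual but combinatorial bookkeeping: each $D_n$ is a nested sum whose inner binomials have arguments assembled from partial sums of the multi-indices $\beta_i^j,\alpha_i^j$ and the counts $\contador_t$, so one must track those partial sums and the conventions exactly for the handful of tuples that occur, and organize the enumeration so that the four-term alternating sum cancels transparently into the claimed three-term closed form. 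No idea beyond faithfully expanding the definitions is needed, but the accounting is delicate and error-prone.
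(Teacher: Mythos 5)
Your proposal is correct and is exactly the paper's route: the corollary is obtained by specializing Theorem~\ref{thmDn:mulip_bivar} to $l=2$, noting that the outer sums reduce to $N\in\{0,2\}$, $N=1$, $N=1$, $N=0$ respectively and that $\mathcal Q_n(2)=\{(2,0,\dots,0),(1,1,0,\dots,0)\}$, then expanding the finitely many $(\beta,\alpha)$ tuples and regrouping by the three binomials $\binom{r+n-4}{n-2}$, $\binom{r+n-3}{n-2}$, $\binom{r+n-2}{n-2}$. The paper leaves this finite bookkeeping implicit, and your description of it (including the role of $A=\sum(j+1-i)\alpha_i^j$ and the inheritance of the vanishing clause) is accurate.
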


Furthermore, we can obtain a closed explicit multiplicity formula for the weight $\mu=0$ in the representation $\pi_{k\varepsilon_1+l\varepsilon_2}$ of $\mathfrak g$.
We next state the formulas for types $\tipo B_n$, $\tipo C_n$ and $\tipo D_n$, but we prove it only for the case $\tipo D_n$, since types $\tipo B_n$ and $\tipo C_n$ follow in a similar way.

\begin{corollary}\label{corDn:mu=0} \textup{(Type $\tipo D_n$)}
 Let $\mathfrak g=\so(2n,\C)$ for some $n\geq 3$ and let $k\geq l\geq0$ integers.
 We have that $m_{\pi_{k\varepsilon_1+l\varepsilon_2}}(0)=0$ if $k+l$ is odd.
 Moreover, if $k+l$ is even, then
 \begin{align*}
  m_{\pi_{k\varepsilon_1+l\varepsilon_2}}(0)
  &= 2\sum_{0\leq N\leq l}\; (-1)^{N+l} \, R(n,k,l,N) \, \binom{\lfloor(l-N)/2\rfloor+n-2}{n-2} \\
  &\quad \binom{\lfloor(k-N+1)/2\rfloor +n-2}{n-2}
  \sum_{t=0}^n \binom{n}{t}\binom{N-t+n-1}{n-1},
 \end{align*}
where
\begin{align*}
 R(n,k,l,N)=\begin{cases}
  \dfrac{l-N+n-2}{l-N+2n-4}
&\mbox{ if } N \equiv l \pmod 2,
\\[4mm]
  \dfrac{k+1-N+n-2}{k+1-N+2n-4}
&\mbox{ if } N \equiv l+1\pmod 2.
  \end{cases}
\end{align*}
\end{corollary}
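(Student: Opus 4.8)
The plan is to bypass the full expansion of $D_n$ in Theorem~\ref{thmDn:mulip_bivar} and to work instead from the two intermediate identities used in its proof. The case $k+l$ odd is immediate: then $r(0)=(k+l)/2\notin\N_0$, so $m_{\pi_{k\varepsilon_1+l\varepsilon_2}}(0)=0$ by Theorem~\ref{thmDn:mulip_bivar}. Assume henceforth that $k+l$ is even. By the virtual ring relation \eqref{eq:mult-pi(k,l)=virtualring} it suffices to compute $m_{\tau_{k,l}}(0)$ and then combine four such terms. Specializing \eqref{eq:multiptensor2} at $\mu=0$ gives
\[
m_{\tau_{k,l}}(0)=\sum_{\eta\in\mathcal P(\pi_{l\varepsilon_1})} m_{\pi_{k\varepsilon_1}}(-\eta)\,m_{\pi_{l\varepsilon_1}}(\eta).
\]
By Lemma~\ref{lem:extremecases} each factor depends on $\eta$ only through $\norma{\eta}$, and $\norma{-\eta}=\norma{\eta}$, so I would group the sum according to $N=\norma{\eta}$, obtaining
\[
m_{\tau_{k,l}}(0)=\sum_{\substack{0\le N\le l\\ N\equiv l\,(2)}} a_n(N)\,\binom{(k-N)/2+n-2}{n-2}\binom{(l-N)/2+n-2}{n-2},
\]
where $a_n(N):=\#\{\eta\in\Z^n:\norma{\eta}=N\}$ and the constraint on $N$ is exactly the support condition from \eqref{eqDn:multip(k)} (using $k\equiv l\pmod2$, so that $\norma{\eta}\le l\le k$ and the parity match both factors).

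Second, I would identify the lattice-point count $a_n(N)$ with the inner sum appearing in the statement. A direct count by the number $t$ of nonzero coordinates gives $a_n(N)=\sum_t\binom{n}{t}2^t\binom{N-1}{t-1}$, but for the present purpose it is cleaner to compare generating functions: one has $\sum_N a_n(N)x^N=\big(\sum_{a\in\Z}x^{|a|}\big)^n=\big(\tfrac{1+x}{1-x}\big)^n$, and likewise $\sum_N\big(\sum_{t=0}^n\binom{n}{t}\binom{N-t+n-1}{n-1}\big)x^N=\tfrac{(1+x)^n}{(1-x)^n}$, whence the two coefficients agree and $a_n(N)=\sum_{t=0}^n\binom{n}{t}\binom{N-t+n-1}{n-1}$.

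Finally, I would insert the expression for $m_{\tau_{k,l}}(0)$ into \eqref{eq:mult-pi(k,l)=virtualring} and collect the terms with equal $N$. The pair $\tau_{k,l},\tau_{k,l-2}$ contributes only for $N\equiv l\pmod2$ (where $(-1)^{N+l}=1$) and the pair $\tau_{k+1,l-1},\tau_{k-1,l-1}$ only for $N\equiv l-1\pmod2$ (where $(-1)^{N+l}=-1$); this produces the sign $(-1)^{N+l}$ and lets the sum run over all $0\le N\le l$. In each parity class the two surviving binomials differ by one in their upper index (in the $l$-index when $N\equiv l$, in the $k$-index when $N\equiv l-1$), and the Pascal-type identities
\[
\binom{p+n-2}{n-2}+\binom{p+n-3}{n-2}=\frac{2p+n-2}{p+n-2}\binom{p+n-2}{n-2},\qquad
\binom{q+n-1}{n-2}+\binom{q+n-2}{n-2}=\frac{2q+n}{q+n-1}\binom{q+n-1}{n-2}
\]
collapse each pair into a single binomial times a rational factor. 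Writing $p=(l-N)/2$ in the first case and $q=(k-1-N)/2$ in the second, these rational factors are precisely $2\,R(n,k,l,N)$, and matching $\lfloor(l-N)/2\rfloor$ and $\lfloor(k-N+1)/2\rfloor$ to $p$ and $q$ in each parity gives the surviving binomials exactly as in the statement. I expect the only delicate point to be the careful bookkeeping of parities and floor functions so that the two cases of $R$ and the two surviving binomials line up correctly; the algebraic identities themselves are routine.
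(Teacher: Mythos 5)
Your proof is correct and follows essentially the same route as the paper's: the virtual-ring relation \eqref{eq:mult-pi(k,l)=virtualring}, the tensor-product formula \eqref{eq:multiptensor2} specialized at $\mu=0$ and grouped by $N=\norma{\eta}$, and the identification $\#\{\eta\in\Z^n:\norma{\eta}=N\}=\sum_{t=0}^n\binom{n}{t}\binom{N-t+n-1}{n-1}$. The only difference is that you carry out explicitly (and correctly) the final step of collapsing the paired binomials into the factor $2\,R(n,k,l,N)$ via the Pascal-type identities, a computation the paper leaves to the reader.
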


\begin{proof}
The asserted formula can be obtained by Theorem~\ref{thmDn:mulip_bivar}.
However, we will prove it in a simplified way, by following the proof of Theorem~\ref{thmDn:mulip_bivar}.
The reason is that the partition in \eqref{eq:partition} of the set of weights of $\pi_{l\varepsilon_{1}}$ is (unnecessarily) too fine for $\mu=0$.
By \eqref{eq:mult-pi(k,l)=virtualring}, we have that
 \begin{equation}\label{eq:mult-pi(k,l)=virtualring2}
 m_{\pi_{k\varepsilon_{1}+l\varepsilon_{2}}}(0) =
 m_{\tau_{k,l}}(0)- m_{\tau_{k+1,l-1}}(0)- m_{\tau_{k-1,l-1}}(0)+ m_{\tau_{k,l-2}}(0).
 \end{equation}
As before, for arbitrary $k\geq l\geq0$ integers, it holds
 \begin{align*}
  m_{\tau_{k,l}}(0)=\sum_{\eta} m_{\pi_{k\varepsilon_{1}}}(-\eta)\, m_{\pi_{l\varepsilon_{1}}}(\eta),
 \end{align*}
 where the sum is restricted to the weights of $\pi_{l\varepsilon_{1}}$.
 From Lemma~\ref{lem:extremecases}, we see that $\eta$ is a weight of $\pi_{l\varepsilon_{1}}$ if and only if $l-\norma{\eta}\in2\N_0$.
 For such a weight $\eta$, $m_{\pi_{k\varepsilon_{1}}}(-\eta)=0$ unless $2\N_0\ni k-\norma{-\eta} = (k-l)+(l-\norma{\eta})$, equivalently $k-l\in2\N_0$.
 We conclude that $m_{\tau_{k,l}}(0)=0$ if $k+l$ is odd.
 Moreover, $m_{\pi_{k\varepsilon_{1}+l\varepsilon_{2}}}(0)=0$ if $k+l$ is odd by \eqref{eq:mult-pi(k,l)=virtualring2}.

 We now proceed to compute $m_{\tau_{k,l}}(0)$ for arbitrary $k\geq l\geq 0$ integers satisfying that $k+l$ is even.
 Fix $N\in\N_0$ such that $l-N\in2\N_0$. For each $\eta\in\Z^n$ with $\norma{\eta}=N$ we know that $m_{\pi_{k\varepsilon_{1}}}(-\eta)$ and $m_{\pi_{l\varepsilon_{1}}}(\eta)$ are constant, independent of the choice of $\eta$.
 Hence,
 \begin{align}\label{eq:multiptensor4}
m_{\tau_{k,l}}(0)=
\sum_{0\leq N\leq l, \atop N\equiv l\textrm{(mod $2$)}}
  \binom{(l-N)/2+n-2}{n-2} \binom{(k-N)/2+n-2}{n-2}\#\{\eta\in\Z^n: \norma{\eta}=N\}.
 \end{align}
It is well known (see for instance \cite[\S2.5]{BeckRobins-book}) that $\#\{\eta\in\Z^n: \norma{\eta}=N\}=\sum_{t=0}^n \binom{n}{t}\binom{N-t+n-1}{n-1}$.
Thus, by replacing \eqref{eq:multiptensor4} in \eqref{eq:mult-pi(k,l)=virtualring2} one obtains the desired formula.
\end{proof}

\begin{corollary}\label{corCn:mu=0}  \textup{(Type $\tipo C_n$)}
	Let $\mathfrak g=\spp(n,\C)$ for some $n\geq 2$ and let $k\geq l\geq0$ integers.
	We have that $m_{\pi_{k\varepsilon_1+l\varepsilon_2}}(0)=0$ if $k+l$ is odd.
	Moreover, if $k+l$ is even then
	\begin{align*}
		m_{\pi_{k\varepsilon_1+l\varepsilon_2}}(0)
		&= 2\sum_{0\leq N\leq l}\; (-1)^{N+l} \, R(n+1,k,l,N) \, \binom{\lfloor(l-N)/2\rfloor+n-1}{n-1} \\
		&\quad \binom{\lfloor(k-N+1)/2\rfloor +n-1}{n-1}
		\sum_{t=0}^n \binom{n}{t}\binom{N-t+n-1}{n-1},
	\end{align*}
where $R(n,k,l,N)$ is as in Corollary~\ref{corDn:mu=0}.
\end{corollary}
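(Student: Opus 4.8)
The plan is to mirror the proof of Corollary~\ref{corDn:mu=0} line by line, replacing the type $\tipo D_n$ instance of Lemma~\ref{lem:extremecases} by the type $\tipo C_n$ one. The only effect of that substitution is to promote each binomial $\binom{a+n-2}{n-2}$ recording a multiplicity of some $\pi_{j\varepsilon_1}$ to $\binom{a+n-1}{n-1}$, whereas the purely combinatorial lattice-point count is left untouched. Concretely, I would specialize \eqref{eq:mult-pi(k,l)=virtualring} to $\mu=0$ and first reprove the vanishing for odd $k+l$: if $\eta$ is a weight of $\pi_{l\varepsilon_1}$ then $l-\norma{\eta}\in 2\N_0$, and $m_{\pi_{k\varepsilon_1}}(-\eta)\neq0$ forces $k-\norma{\eta}\in 2\N_0$; subtracting gives $k-l\in2\N_0$, so each of the four $\tau$ terms vanishes unless $k+l$ is even.

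For even $k+l$ I would record, exactly as in \eqref{eq:multiptensor4} but with $n-1$ in place of $n-2$, that
\begin{equation*}
m_{\tau_{k,l}}(0)=\sum_{0\leq N\leq l,\atop N\equiv l(\operatorname{mod}2)} \binom{(l-N)/2+n-1}{n-1}\binom{(k-N)/2+n-1}{n-1}\sum_{t=0}^n\binom{n}{t}\binom{N-t+n-1}{n-1},
\end{equation*}
the last factor being $\#\{\eta\in\Z^n:\norma{\eta}=N\}$, which is identical to the type $\tipo D_n$ computation because the ambient lattice is $\Z^n$ in both cases. I would then substitute this and its three analogues into \eqref{eq:mult-pi(k,l)=virtualring} and collect contributions by the residue of $N$ modulo $2$: the class $N\equiv l$ receives contributions from $\tau_{k,l}$ and $\tau_{k,l-2}$, and the class $N\equiv l+1$ from $\tau_{k+1,l-1}$ and $\tau_{k-1,l-1}$. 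The signs in \eqref{eq:mult-pi(k,l)=virtualring} assemble precisely into the factor $(-1)^{N+l}$.

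The one genuinely new step is the collapse of each such pair of consecutive binomials. In the class $N\equiv l$ the second multiplicity binomial is shared, and one meets $\binom{(l-N)/2+n-1}{n-1}+\binom{(l-N)/2+n-2}{n-1}$; in the class $N\equiv l+1$ the first is shared, and one meets $\binom{(k+1-N)/2+n-1}{n-1}+\binom{(k-1-N)/2+n-1}{n-1}$. Both are instances of
\begin{equation*}
\binom{m+n-1}{n-1}+\binom{m+n-2}{n-1}=\frac{2m+n-1}{m+n-1}\binom{m+n-1}{n-1},
\end{equation*}
which follows from $\binom{m+n-2}{n-1}=\tfrac{m}{m+n-1}\binom{m+n-1}{n-1}$. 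Taking $m=(l-N)/2$ in the first class yields the factor $\tfrac{l-N+n-1}{l-N+2n-2}$, and $m=(k+1-N)/2$ in the second yields $\tfrac{k-N+n}{k-N+2n-1}$; these are exactly the two branches of $R(n+1,k,l,N)$, and the emerging coefficient $2$ is the prefactor in the statement. Rewriting the shared binomials through $\lfloor(l-N)/2\rfloor$ and $\lfloor(k-N+1)/2\rfloor$, which return the correct integer in either parity, then merges the two residue classes into the single sum over $0\leq N\leq l$ of the corollary.

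I expect the main obstacle to be purely bookkeeping: correctly tracking which $\tau_{k',l'}$ feeds each residue class together with its induced sign, and checking that the binomial identity reproduces $R$ with the shift $n\mapsto n+1$. That shift is forced because the multiplicity binomials in type $\tipo C_n$ have gap $n-1=(n+1)-2$, so they match the type $\tipo D_n$ pattern at parameter $n+1$, while the lattice-point count $\sum_{t=0}^n\binom{n}{t}\binom{N-t+n-1}{n-1}$ must retain the true value $n$; keeping these two roles of $n$ separate is the only place where care is needed.
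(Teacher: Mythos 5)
Your proposal is correct and follows exactly the route the paper intends: the paper proves only the $\tipo D_n$ case (Corollary~\ref{corDn:mu=0}) and states that type $\tipo C_n$ "follows in a similar way," which is precisely your line-by-line adaptation replacing the $\binom{\cdot+n-2}{n-2}$ multiplicities by $\binom{\cdot+n-1}{n-1}$ while keeping the lattice-point count $\sum_{t=0}^n\binom{n}{t}\binom{N-t+n-1}{n-1}$ unchanged. Your explicit verification of the pairing of residue classes and of the binomial identity producing $2R(n+1,k,l,N)$ correctly fills in the collapsing step that the paper leaves implicit after equation~\eqref{eq:multiptensor4}.
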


\begin{corollary}\label{corBn:mu=0}  \textup{(Type $\tipo B_n$)}
	Let $\mathfrak g=\so(2n+1,\C)$ for some $n\geq 2$ and let $k\geq l\geq0$ integers.
	Then
	\begin{align*}
		m_{\pi_{k\varepsilon_1+l\varepsilon_2}}(0)
		&= \sum_{0\leq N\leq l}\, (-1)^{N+l}S(n,k,l,N) \, \binom{\lfloor(l-N)/2\rfloor+n-1}{n-1} \\
		&\quad \binom{\lfloor(k+1-N)/2\rfloor +n-1}{n-1}
		\sum_{t=0}^n \binom{n}{t}\binom{N-t+n-1}{n-1},
	\end{align*}
	where
	\begin{align*}
		S(n,k,l,N)=\begin{cases}
			1-\dfrac{\lfloor(l-N)/2\rfloor\lfloor(k+1-N)/2\rfloor}{(\lfloor(l-N)/2\rfloor+n-1)(\lfloor(k+1-N)/2\rfloor+n-1)}
			&\mbox{ if }  k+l \mbox{ is even},
			\\[4mm]
			\dfrac{\lfloor(k+1-N)/2\rfloor}{\lfloor(k+1-N)/2\rfloor+n-1}-\dfrac{\lfloor(l-N)/2\rfloor}{\lfloor(l-N)/2\rfloor+n-1}
			&\mbox{ if } k+l \mbox{ is odd}.
				\end{cases}
		\end{align*}
\end{corollary}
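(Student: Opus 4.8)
The plan is to mirror the simplified argument used for Corollary~\ref{corDn:mu=0}, substituting the type $\tipo B_n$ ingredients for the type $\tipo D_n$ ones. Starting from \eqref{eq:mult-pi(k,l)=virtualring} I would write
\[
m_{\pi_{k\varepsilon_1+l\varepsilon_2}}(0)=m_{\tau_{k,l}}(0)-m_{\tau_{k+1,l-1}}(0)-m_{\tau_{k-1,l-1}}(0)+m_{\tau_{k,l-2}}(0),
\]
and reduce everything to an expression for $m_{\tau_{k,l}}(0)$ valid for arbitrary integers $k\geq l\geq0$. As in that proof, the fine partition \eqref{eq:partition} is unnecessary here: grouping the weights $\eta$ of $\pi_{l\varepsilon_1}$ by the single invariant $\norma{\eta}=N$ suffices, since by Lemma~\ref{lem:extremecases} both $m_{\pi_{l\varepsilon_1}}(\eta)$ and $m_{\pi_{k\varepsilon_1}}(-\eta)$ depend only on $N$. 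The decisive difference with types $\tipo C_n$ and $\tipo D_n$ is that for type $\tipo B_n$ the weights of $\pi_{l\varepsilon_1}$ are \emph{all} $\eta\in\Z^n$ with $\norma{\eta}\leq l$, with no parity restriction, so every $0\leq N\leq l$ contributes.

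Using \eqref{eqBn:multip(k)} together with the counting identity $\#\{\eta\in\Z^n:\norma{\eta}=N\}=\sum_{t=0}^n\binom{n}{t}\binom{N-t+n-1}{n-1}$ quoted in the proof of Corollary~\ref{corDn:mu=0}, I would obtain
\[
m_{\tau_{k,l}}(0)=\sum_{N=0}^{l}\binom{\lfloor(l-N)/2\rfloor+n-1}{n-1}\binom{\lfloor(k-N)/2\rfloor+n-1}{n-1}\sum_{t=0}^n\binom{n}{t}\binom{N-t+n-1}{n-1}.
\]
Substituting the four instances of this expression into the virtual-ring identity above and collecting, for each fixed $N$, the coefficient of $\sum_t\binom{n}{t}\binom{N-t+n-1}{n-1}$, I would then carry out the combinatorial simplification.

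The heart of the computation is a cancellation that occurs for each $N$. Writing $a=\lfloor(l-N)/2\rfloor$ and $b=\lfloor(k-N)/2\rfloor$, I would split into four cases according to the parities of $l-N$ and $k-N$ (the latter being determined by the former once the parity of $k+l$ is fixed, since $l-N$ and $k-N$ have the same parity iff $k+l$ is even). In each case two of the four binomial products coincide and cancel, and the two survivors collapse by the elementary relations
\[
\binom{a-1+n-1}{n-1}=\tfrac{a}{a+n-1}\binom{a+n-1}{n-1},\qquad
\binom{b+n-1}{n-1}=\tfrac{b+1}{b+n}\binom{b+1+n-1}{n-1},
\]
which rewrite every surviving term over the common pair $\binom{a+n-1}{n-1}\binom{\lfloor(k+1-N)/2\rfloor+n-1}{n-1}$. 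Tracking the parity of $l-N$ produces the global sign $(-1)^{N+l}$, and reading off the remaining scalar yields exactly the two branches of $S(n,k,l,N)$: the factor $1-ab'/((a+n-1)(b'+n-1))$ when $k+l$ is even, and $b'/(b'+n-1)-a/(a+n-1)$ when $k+l$ is odd, where $b'=\lfloor(k+1-N)/2\rfloor$.

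Finally, the boundary values $N\in\{l-1,l\}$, where fewer than four of the $\tau$'s appear, are absorbed automatically by the convention $\binom{b}{a}=0$ for $b<a$: at both of these values one has $a=\lfloor(l-N)/2\rfloor=0$, so every term carrying a factor $\binom{a-1+n-1}{n-1}=\binom{n-2}{n-1}=0$ vanishes, and these are precisely the terms coming from the $\tau$'s that drop out of the truncated sums; hence the uniform formula remains valid. The main obstacle is purely bookkeeping: keeping the four parity subcases and the shifting floor functions straight so that the cancellations and the two forms of $S$ emerge cleanly. The cancellations themselves are transparent once the cases are set up, and the types $\tipo B_n$, $\tipo C_n$ treated in Corollaries~\ref{corCn:mu=0} and \ref{corBn:mu=0} differ from the $\tipo D_n$ computation only in the exponent $n-1$ versus $n-2$ and, for $\tipo B_n$, the absence of the parity constraint on $N$.
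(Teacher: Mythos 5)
Your proposal is correct and follows essentially the same route as the paper: the paper proves only the $\tipo D_n$ case (Corollary~\ref{corDn:mu=0}) and states that $\tipo B_n$ "follows in a similar way," and your argument is precisely that adaptation — the virtual-ring identity \eqref{eq:mult-pi(k,l)=virtualring}, the coarse grouping of weights of $\pi_{l\varepsilon_1}$ by $\norma{\eta}=N$ with no parity restriction, the lattice-point count, and the four-parity-case cancellation that produces $(-1)^{N+l}S(n,k,l,N)$. The case analysis and the handling of the boundary terms $N\in\{l-1,l\}$ via the convention $\binom{n-2}{n-1}=0$ both check out.
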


\subsection{Remarks}
We end this section with a few remarks.

\begin{remark}\label{remDn:n=2}
The weight multiplicity formula for type $\tipo D_n$ in Theorem~\ref{thmDn:mulip_bivar} also holds when $n=2$ with $\pi_{k\varepsilon_{1}+l\varepsilon_{2}}$ replaced by $\pi_{k\varepsilon_{1}+l\varepsilon_{2}} \oplus \pi_{k\varepsilon_{1}-l\varepsilon_{2}}$.
It is important to note that $\so(4,\C)$ (type $\tipo D_2$) is not simple.
Indeed, $\so(4,\C)\simeq\sll(2,\C)\oplus\sll(2,\C)$ or $\tipo D_2=\tipo A_1\oplus \tipo A_1$.
Hence, a weight multiplicity formula for the representations $\pi_{k\varepsilon_1\pm l\varepsilon_{2}}$ with $k\geq l\geq 0$ of $\so(4,\C)$ can be obtained using this fact.
\end{remark}

\begin{remark}\label{rem:Maddox}
Maddox in \cite{Maddox14} determined a weight multiplicity formula for any bivariate representation for $\mathfrak g$ of type $\tipo C_n$.
Her expression (\cite[Thm.~4.3]{Maddox14}) looks more elegant than the one in Theorem~\ref{thmCn:mulip_bivar}.
However, it includes a sum over ordered partitions of $r(\mu)$ of length $n$ and another sum over the subsets of a set of $2n$ elements.
In conclusion, her shorter formula hides in the mentioned sums the involved terms appearing in the expression given in Theorem~\ref{thmCn:mulip_bivar}.
Furthermore, the neat dependence condition in Theorem~\ref{thm:depending} does not follows immediately from \cite[Thm.~4.3]{Maddox14}.

We now compare Maddox's method with ours.
Both employ the expression in Lemma~\ref{lem:virtualring(tau)} for an irreducible representation as sum of tensor products in the virtual ring of representations.
The significant difference arises in the calculation of the weight multiplicity in a tensor product.
Roughly speaking, the proofs of Theorems~\ref{thmBn:mulip_bivar}--\ref{thmDn:mulip_bivar} use the identity \eqref{eq:multiptensor2} and then a convenient partition of the set of weights of the small component in the tensor product.
On the other hand, Maddox makes use of $\tau_{k,l} = \pi_{k \varepsilon_{1}}\otimes \pi_{l \varepsilon_{1}} \simeq \op{Sym}^k(\C^{2n})\otimes \op{Sym}^l(\C^{2n})$ for $\mathfrak g$ of type $\tipo C_n$, and counts the weight vectors in terms of a function which has a combinatorial expression.
\end{remark}

\begin{remark}
There are in the literature several algorithms to compute weight multiplicities. 
The one based on Freudenthal's formula is the most classical and is still used for several computer program (e.g.\ Sage~\cite{Sage}). 
Nowadays, there exist faster algorithms. 
A possible time comparison with any of them would require an implementation on Sage, which would be unfair because of the poor computer programming skills of the authors. 

Among the mentioned faster algorithms, it is distinguished the one by W.~Baldoni and M.~Vergne~\cite{BaldoniVergne18} (see also \cite{BaldoniBeckCochetVergne06,BaldoniVergne15,Cochet05} for related results), which is based on symbolic computations of Kostant partition functions. 
See also \cite{ChristandlDoranWalter12,Schutzer12,Cavallin17} for recent different approaches. 
\end{remark}

\begin{remark}\label{remDn:referee}
This interesting remark about the behavior of $m_{\pi_{k\varepsilon_1+l\varepsilon_2}}(\mu)$ as a function on $k$ and $l$ was pointed out by the referee. 
For simplicity, we take $\mu=0$, we fix $l$ a non-negative integer, and we consider $\mathfrak g$ a classical Lie algebra of type $\tipo D_n$ for some $n\geq3$, although the general case is very similar. 
Corollary~\ref{corDn:mu=0} implies that $k\mapsto m_{\pi_{k\varepsilon_1+l\varepsilon_2}}(0)$ is a quasi-polynomial in the variable $k\geq l$ whose degree does not depend on $l$. 
In fact, its degree coincides with the degree of the polynomial $k\mapsto m_{\pi_{2k\varepsilon_1}}(0) = \binom{k+n-2}{n-2}$ (i.e.\ when $l=0$), which is equal to $n-2$. 

An interesting problem, also suggested by the referee, is to understand the behavior of the function $l\mapsto m_{\pi_{(l+h)\varepsilon_1+l\varepsilon_2}}(0)$, for some $h$ fixed. 
This does not seem to be computable from Corollary~\ref{corDn:mu=0}. 
	
\end{remark}

\begin{remark}\label{rem:applications}
In \cite[Section~7]{LR-fundstring} there is a detailed account of some applications of weight multiplicity formulas in spectral geometry (see \cite{LMR-onenorm}, \cite{BoldtLauret-onenormDirac}, \cite{Lauret-spec0cyclic}, \cite{Lauret-p-spectralens}).
These expressions for the weight multiplicities are used to determine explicitly the spectra of certain natural differential operators on a manifold (or a good orbifold) of the form $\Gamma\ba G/K$, where $G$ is a semisimple compact Lie group, $K$ is a closed subgroup of $G$ and $\Gamma$ is a finite subgroup of the maximal torus $T$ of $G$.

We next specify some cases where the formulas obtained in this article could be applied.
When $G=\Sp(n)$ and $K=\Sp(n-1)\times \Sp(1)$, the spherical representations associated to the Gelfand pair $(G,K)$ (i.e.\ the set of irreducible representations of $G$ containing non-zero vectors fixed by $K$) have highest weight of the form $k(\varepsilon_{1}+\varepsilon_{2})$ for $k\geq0$.
Consequently, Theorem~\ref{thmCn:mulip_bivar} may be applied to determine the spectrum of the Laplace--Beltrami operator acting on functions on spaces covered by the $n$-dimensional quaternionic projective space $\Sp(n)/\Sp(n-1)\times\Sp(1)$ with abelian fundamental group.

When $G=\SO(m)$ and $K=\SO(m-2)\times\SO(2)$, the corresponding spherical representations for $(G,K)$ have highest weight of the form $k\varepsilon_1+l\varepsilon_2$ for $k\geq l\geq 0$.
Thus, according to $m$ is odd or even, Theorems~\ref{thmBn:mulip_bivar} or \ref{thmDn:mulip_bivar} could be applied to the same purpose as above, for spaces covered by the $2$-Grassmannian space $G/K$ with abelian fundamental group.

In a slightly different way, we now consider $n\geq3$, $G=\SO(2n)$,  $K=\SO(2n-1)$ and more general natural differential operators.
An irreducible representation $\tau$ of $K$ induces a natural $G$-homogeneous complex vector bundle $E_\tau$ on $G/K$.
There is an associated natural differential operator $\Delta_\tau$ acting on smooth sections of $E_\tau$, which induces the differential operator $\Delta_{\tau,\Gamma}$ acting on smooth sections of $\Gamma\ba E_\tau$, that is, $\Gamma$-invariant smooth sections of $E_\tau$.
We now fix $\tau=\tau_{b\varepsilon_1}$, the irreducible representation of $K$ with highest weight $b \varepsilon_{1}$.
The corresponding $\tau_{b\varepsilon_1}$-spherical representations of $(G,K,\tau_{b\varepsilon_1})$ (i.e.\ the set of $\pi\in\widehat G$ such that $\op{Hom}_K(\tau_{b\varepsilon_1},\pi|_K)\neq0$) is equal to $\{\pi_{k\varepsilon_{1}+l\varepsilon_{2}}: k\geq b\geq l\geq 0\}$.
Consequently, Theorem~\ref{thmDn:mulip_bivar} might be used to determine the spectrum of $\Delta_{\tau_{b\varepsilon_1},\Gamma}$ for $\Gamma$ a finite subgroup of the maximal torus of $G$.
An analogous process can be done in the case $G=\SO(2n-1)$ and $K=\SO(2n-2)$.
\end{remark}

\section{Type $\tipo A_n$}\label{sec:typeAn}

Consider in $\mathfrak g=\sll(n+1,\C)$ the Cartan subalgebra $$\mathfrak h =\{\diag\big(\theta_1,\dots, \theta_{n+1}\big) : \theta_i\in\C\;\forall\, i,\; \textstyle\sum\limits_{i=1}^{n+1}\theta_i=0\}.$$
Set $\varepsilon_i\big(\diag(\theta_1,\dots,\theta_{n+1})\big)= \theta_i$ for each $1\leq i\leq n+1$.
We will use the conventions of \cite[Lecture~15]{FultonHarris-book}, that is, we correspondingly write
\begin{equation*}
 \mathfrak h^* =  \bigoplus_{i=1}^{n+1} \C\varepsilon_i / \langle \textstyle\sum\limits_{i=1}^{n+1}\varepsilon_i=0 \rangle,
\end{equation*}
and we write $\varepsilon_i$ for its image in $\mathfrak h^*$.
Consequently, the set of positive roots is given by $\Sigma^+(\mathfrak g,\mathfrak h) :=\{\varepsilon_i-\varepsilon_j: 1\leq i<j\leq n+1\}$ and the weight lattice is 
$
P(\mathfrak g):=\bigoplus_{i=1}^{n+1} \Z\varepsilon_i / \langle  \sum_{i=1}^{n+1}\varepsilon_i=0 \rangle.
$
Two weights $\mu=\sum_{i=1}^{n+1} b_i \varepsilon_i$ and $\nu=\sum_{i=1}^{n+1} c_i \varepsilon_i$ in $P(\mathfrak g)$ coincide if and only if $b_i-c_i$ is constant, independent of $i$.

A weight $\lambda =\sum_{i=1}^{n+1}a_i\varepsilon_i$ in $P(\mathfrak g)$ is dominant if and only if $a_1\geq a_2\geq \dots \geq a_{n+1}$.
By the Highest Weight Theorem, the irreducible representations of $\mathfrak g$ are in correspondence with dominant weights. 
We denote by $\pi_\lambda$ the irreducible representation with highest weight $\lambda$, which will be always written as $\lambda=\sum_{i=1}^{n+1} a_i\varepsilon_i$ with $a_{n+1}=0$. 
Thus, the irreducible representations of $G$ are in correspondence with elements in the set
$$
P^{++}(\mathfrak g) := \left\{ \textstyle\sum\limits_{i=1}^n a_i\varepsilon_i: a_i\in\Z\;\forall\, i,\; a_1\geq a_2\geq \dots \geq a_{n}\geq 0 \right\}.
$$
The fundamental weights are given by $\omega_p = \varepsilon_1+\dots+\varepsilon_p$ for each $1\leq p\leq n$. 
Thus, any integer combination between $\omega_1$ and $\omega_2$ has the form $k\varepsilon_{1}+l\varepsilon_{2}$ for some integers $k\geq l\geq0$.

For $\lambda = \sum_{i=1}^n a_i\varepsilon_i \in P^{++}(\mathfrak g)$, any weight $\mu$ of $\pi_{\lambda}$ (i.e.\ the multiplicity of $\mu$ in $\pi_{\lambda}$ is non-zero) can be written as $\mu=\sum_{i=1}^{n+1} b_i \varepsilon_i$ for some $b_1,\dots,b_{n+1}\in \N_0$ satisfying $\sum_{i=1}^{n+1} b_i=\sum_{i=1}^n a_i$.
Indeed, every weight in $\pi_{\lambda}$ is a difference between $\lambda$ and a sum of positive roots.

Let $\lambda $ and $\mu$ as in the previous paragraph. 
It is well known that (see for instance \cite[(A.19)]{FultonHarris-book}) the multiplicity of $\mu$ in $\pi_{\lambda}$ is given by the \emph{Kostka number} $K_{\lambda,\mu}$: the number of \emph{semistandard tableaux} on the Young diagram associated to $\lambda$ (i.e.\ a diagram with $a_i$ boxes in the $i$th row, with the rows of boxes lined up on the left) of type $\mu$. 
More precisely, $K_{\lambda,\mu}$ is the number of ways one can fill the boxes of the Young diagram associated to $\lambda$ with $b_1$ $1$'s, $b_2$ $2$'s, up to $b_{n+1}$ $(n+1)$'s, in such a way that the entries in each row are non-decreasing, and those in each column are strictly increasing.

The next lemma will be needed in the proof of the main result of this section.

\begin{lemma}\label{lem:virtualring(tau)An}
 Let $\mathfrak g$ be a classical Lie algebra of type $\tipo A_n$. For integers $k\geq l\geq0$ write $\tau_{k,l}=\pi_{k\varepsilon_{1}}\otimes \pi_{l\varepsilon_{1}}$.
 Then, in the virtual ring of representations, we have that $$\pi_{k\varepsilon_{1}+l\varepsilon_{2}}\simeq
 \tau_{k,l}- \tau_{k+1,l-1}.$$
\end{lemma}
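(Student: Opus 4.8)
The plan is to follow the same strategy as in Lemma~\ref{lem:virtualring(tau)}, replacing the Koike--Terada fusion rule valid in types $\tipo B_n$, $\tipo C_n$, $\tipo D_n$ by the simpler one available in type $\tipo A_n$. The essential point is that for $\mathfrak g=\sll(n+1,\C)$ we have $\pi_{k\varepsilon_1}\simeq\operatorname{Sym}^k(\C^{n+1})$, and the standard representation carries no invariant bilinear form; hence the tensor product of two symmetric powers decomposes by Pieri's rule with no contraction (trace) terms. Explicitly, for integers $k\geq l\geq0$,
\[
\tau_{k,l}=\pi_{k\varepsilon_1}\otimes\pi_{l\varepsilon_1}
\simeq\bigoplus_{j=0}^{l}\pi_{(k+l-j)\varepsilon_1+j\varepsilon_2},
\]
since adding a horizontal strip of $k$ boxes to the one-row diagram $(l)$ produces exactly the two-row diagrams $(k+l-j,j)$ with $0\leq j\leq l$ (Pieri's formula; see e.g.\ \cite{FultonHarris-book}). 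Each of these two-row partitions is dominant because $k+l-j\geq j$ for $j\leq l\leq k$, so every summand is a genuine irreducible representation $\pi_{(k+l-j)\varepsilon_1+j\varepsilon_2}$.

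First I would record the same expansion for $\tau_{k+1,l-1}$. Its parameters satisfy $k+1\geq l-1$ and sum again to $k+l$, with smaller part $l-1$, so
\[
\tau_{k+1,l-1}\simeq\bigoplus_{j=0}^{l-1}\pi_{(k+l-j)\varepsilon_1+j\varepsilon_2}.
\]
Then I would subtract the two expansions in the virtual ring of representations. The index $j$ ranges over $0\leq j\leq l$ in the first sum and over $0\leq j\leq l-1$ in the second, so every summand cancels except the term $j=l$, giving
\[
\tau_{k,l}-\tau_{k+1,l-1}\simeq\pi_{k\varepsilon_1+l\varepsilon_2},
\]
which is exactly the claimed identity.

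This telescoping is straightforward, so I do not anticipate a genuine obstacle; the only care required is to invoke the correct form of Pieri's rule and to verify the dominance of each intermediate highest weight (so that the right-hand sides are actual representations, not merely formal symbols). As an alternative, if one prefers to stay within the tableau framework just introduced, the same decomposition of $\operatorname{Sym}^k\otimes\operatorname{Sym}^l$ can be recovered by computing the relevant Kostka numbers $K_{(k+l-j,j),\mu}$, but the Pieri-rule argument is the most economical.
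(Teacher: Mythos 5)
Your proposal is correct and follows essentially the same route as the paper: the paper also expands $\tau_{k,l}$ via the type $\tipo A_n$ fusion rule (Pieri, cited as \cite[Prop.~15.25]{FultonHarris-book}) as $\bigoplus_{p=0}^{l}\pi_{(k+p)\varepsilon_1+(l-p)\varepsilon_2}$, which is your decomposition under the reindexing $j=l-p$, and then cancels against the analogous expansion of $\tau_{k+1,l-1}$ to leave the single term $\pi_{k\varepsilon_1+l\varepsilon_2}$. Your added remarks on the absence of contraction terms and on dominance of the intermediate weights are correct but not needed beyond what the cited fusion rule already provides.
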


\begin{proof}
The well-known fusion rule (see for instance \cite[Prop. 15.25]{FultonHarris-book})
 $$
  \tau_{k,l}=
\pi_{k\varepsilon_{1}}\otimes \pi_{l\varepsilon_{1}}\simeq
\bigoplus_{p=0}^{l} \pi_{(k+p)\varepsilon_1+(l-p)\varepsilon_2}
$$
implies
 $$\tau_{k,l}-\tau_{k+1,l-1} =
 \sum_{p=0}^{l} \pi_{(k+p)\varepsilon_1+(l-p)\varepsilon_2}
 -
 \sum_{p=1}^{l} \pi_{(k+p)\varepsilon_1+(l-p)\varepsilon_2}=
 \pi_{k\varepsilon_{1}+l\varepsilon_{2}},
 $$
and the lemma follows.
\end{proof}

We now want to calculate the weight multiplicities of the representation with highest weight a non-negative integer combination of the first two fundamental weights. 
The following multiplicity formula is probably already known, but it is included here for completeness.

\begin{theorem}\label{thmAn:multip(k,l)}\textup{(Type $\tipo A_n$)}
Let $\mathfrak g=\sll(n+1,\C)$ for some $n\geq2$ and let $k\geq l\geq 0$ integers.
Let $\mu=\sum_{i=1}^{n+1} a_i\varepsilon_i \in P(\mathfrak g)$ with $a_i\in \N_0$ for all $i$ and $\sum_{i=1}^{n+1} a_i=k+l$. If $a_i\leq k$ for all $i$, then
 \begin{align*}
m_{\pi_{k\varepsilon_1+l\varepsilon_2}}(\mu)= &  \sum_{\mathbf q\in \mathcal{Q}_{n+1}(l)} \prod_{j=1}^{l} \binom{n+1-\sum\limits_{t=0}^{j-1} \contador_t(\mu) - \sum\limits_{i=j+1}^{l} s_i^{\mathbf q}}{s_j^{\mathbf q}} \\
  &- \sum_{\mathbf q'\in \mathcal{Q}_{n+1}(l-1)} \prod_{j=1}^{l-1} \binom{n+1-\sum\limits_{t=0}^{j-1} \contador_t(\mu) - \sum\limits_{i=j+1}^{l-1} s_i^{\mathbf q'}}{s_j^{\mathbf q'}},
 \end{align*}
 and $m_{\pi_{k\varepsilon_1+l\varepsilon_2}}(\mu)=0$ otherwise, where $\contador_t(\mu) = \#\{ i: 1\leq i\leq n+1,\, a_i=t\}$,
\begin{align*}
\mathcal{Q}_{n+1}(N)&= \{ \mathbf{q}=(q_1,q_2,\dots,q_{n+1})\in \Z^{n+1}: q_1\geq q_2\geq\dots\geq q_{n+1}\geq0,\, \textstyle \sum_{i=1}^{n+1} q_i=N \},
\end{align*}
and $s_j^{\mathbf q}:= \#\{i:1\leq i\leq {n+1},\, q_i=j\}$ for $\mathbf q\in \mathcal{Q}_{n+1}(N)$ and $1\leq j\leq N$.
\end{theorem}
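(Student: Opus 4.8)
The plan is to follow the same strategy as in the proofs of Theorems~\ref{thmBn:mulip_bivar}--\ref{thmDn:mulip_bivar}, taking advantage of the simpler structure of type $\tipo A_n$. Since weight multiplicities are additive along the identity of Lemma~\ref{lem:virtualring(tau)An}, I would first write
\[
m_{\pi_{k\varepsilon_1+l\varepsilon_2}}(\mu) = m_{\tau_{k,l}}(\mu) - m_{\tau_{k+1,l-1}}(\mu),
\]
reducing everything to computing the two tensor-product multiplicities. For these I would use the convolution formula $m_{\tau_{k,l}}(\mu)=\sum_\eta m_{\pi_{k\varepsilon_1}}(\mu-\eta)\,m_{\pi_{l\varepsilon_1}}(\eta)$, the sum ranging over the weights $\eta$ of $\pi_{l\varepsilon_1}$. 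The key simplification is that $\pi_{m\varepsilon_1}\simeq\op{Sym}^m(\C^{n+1})$, whose weights are precisely those admitting a representative $\sum_i c_i\varepsilon_i$ with $c_i\in\N_0$ and $\sum_i c_i=m$, each with multiplicity one (equivalently $K_{(m),\eta}=1$).

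Writing $\eta=\sum_i c_i\varepsilon_i$ with $c_i\geq0$ and $\sum_i c_i=l$, the class $\mu-\eta$ has the representative $(a_i-c_i)_i$, whose coordinates sum to $k$; since a class of entry-sum $k$ has a unique such representative, $\mu-\eta$ is a weight of $\pi_{k\varepsilon_1}$ if and only if $a_i\geq c_i$ for all $i$. Hence $m_{\tau_{k,l}}(\mu)=N(l)$ and likewise $m_{\tau_{k+1,l-1}}(\mu)=N(l-1)$, where
\[
N(M)=\#\{(c_1,\dots,c_{n+1})\in\N_0^{\,n+1}:\ \textstyle\sum_i c_i=M,\ c_i\leq a_i\ \forall i\}.
\]
No hypothesis $a_i\leq k$ is needed for this step. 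It then remains to prove the purely combinatorial identity
\[
N(M)=\sum_{\mathbf q\in\mathcal{Q}_{n+1}(M)}\prod_{j=1}^{M}\binom{n+1-\sum_{t=0}^{j-1}\contador_t(\mu)-\sum_{i=j+1}^{M}s_i^{\mathbf q}}{s_j^{\mathbf q}}
\]
for $M=l$ and $M=l-1$, which substituted above yields the displayed difference of sums.

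To establish this identity I would partition the vectors counted by $N(M)$ according to the partition $\mathbf q\in\mathcal{Q}_{n+1}(M)$ recording the multiset of their coordinates, so that $\mathbf q$ prescribes that exactly $s_j^{\mathbf q}$ coordinates equal $j$. For fixed $\mathbf q$ the admissible placements are counted by assigning the values to the $n+1$ coordinates in decreasing order $j=M,M-1,\dots,1$: a coordinate $i$ may receive the value $j$ only when $a_i\geq j$, and the number of such coordinates is $n+1-\sum_{t=0}^{j-1}\contador_t(\mu)$; of these, the $\sum_{i=j+1}^{M}s_i^{\mathbf q}$ coordinates already occupied by strictly larger values are admissible for $j$ as well, leaving exactly $n+1-\sum_{t=0}^{j-1}\contador_t(\mu)-\sum_{i=j+1}^{M}s_i^{\mathbf q}$ free coordinates from which to choose the $s_j^{\mathbf q}$ receiving $j$; the remaining coordinates are forced to take the value $0$. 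Since $\sum_{i=j+1}^{M}s_i^{\mathbf q}$ depends only on $\mathbf q$, the factorization over $j$ is independent of the placement order, and summing over $\mathbf q$ gives the identity. This bookkeeping of occupied-versus-available coordinates is the only delicate point; it is entirely analogous to, but much lighter than, the evaluation of $\#\mathcal{P}^{\mathbf q}_{\beta,\alpha}$ in the proof of Theorems~\ref{thmBn:mulip_bivar}--\ref{thmDn:mulip_bivar}, as there are now no sign choices nor any refinement by $\beta$ and $\alpha$.

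Finally, for the vanishing clause I would argue from the tableau description already recorded in the text: the multiplicity equals the Kostka number $K_{\lambda,\mu}$ with $\lambda=(k,l)$, whose Young diagram has $k$ columns, and since the entries of each column are strictly increasing, any fixed value occurs at most once per column, hence at most $k$ times in all. Thus $K_{\lambda,\mu}=0$ as soon as some $a_i>k$, which is exactly the asserted \emph{otherwise} case (consistently, the difference of sums computed above then vanishes). The main obstacle is therefore the combinatorial identity for $N(M)$, although in type $\tipo A_n$ it is considerably simpler than its $\tipo B_n$, $\tipo C_n$ and $\tipo D_n$ counterparts.
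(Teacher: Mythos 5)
Your proposal is correct and follows essentially the same route as the paper: the decomposition from Lemma~\ref{lem:virtualring(tau)An}, the convolution formula for the tensor product, the observation that every weight of $\pi_{m\varepsilon_1}\simeq\op{Sym}^m(\C^{n+1})$ has multiplicity one, and the count of the admissible $\eta$ partitioned by the multiset $\mathbf q$ of its coordinates, filled in from the largest value downward. The only addition is your explicit column-counting argument showing $K_{(k,l),\mu}=0$ when some $a_i>k$, which the paper leaves implicit; it is correct and a welcome clarification.
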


\begin{proof}
Since $\tau_{k,l}=\pi_{k\varepsilon_1}\otimes \pi_{l\varepsilon_1}$, 
we have that
\begin{align}\label{eq:multiptensor2An}
	m_{\tau_{k,l}}(\mu)=\sum_{\eta} m_{\pi_{k\varepsilon_{1}}}(\mu-\eta) \; m_{\pi_{l\varepsilon_{1}}}(\eta),
\end{align}
where the sum is restricted to the weights of $\pi_{l\varepsilon_{1}}$.
For $h$ any positive integer, the Young diagram associated to $\pi_{h\varepsilon_{1}}$ has only one row, of lenght $h$.
Thus, the set of weights of $\pi_{h\varepsilon_{1}}$ is given by elements of the form $\nu=\sum_{i=1}^{n+1} c_i\varepsilon_i$ with $c_1,\dots,c_{n+1}\in\N_0$ and $\sum_{i=1}^{n+1} c_i=h$, and all of them have multiplicity $1$.
Consequently, $m_{\tau_{k,l}}(\mu)$ is equal to the number of weights $\eta$ of $\pi_{l \varepsilon_{1}}$ satisfying that $\mu-\eta$ is a weight of $\pi_{k \varepsilon_{1}}$.

Let $\mathbf q\in \mathcal{Q}_{n+1}(l)$.
We want to count the number of weights $\eta=\sum_{i=1}^{n+1} b_i\varepsilon_i$ contributing to \eqref{eq:multiptensor2An} (i.e.\ $\eta$ is a weight of $\pi_{l \varepsilon_{1}}$ and $\mu-\eta$ is a weight of $\pi_{k \varepsilon_{1}}$) satisfying that $s^{\mathbf q}_j$ entries of $\eta$ are equal to $j$ for each $1\leq j\leq l$.
Clearly, $\mu-\eta$ is a weight of $\pi_{k \varepsilon_{1}}$ if and only if $a_i-b_i\geq0$ for all $1\leq i\leq n+1$.
Since for each $1\leq j\leq l$ there are $n+1-\sum_{t=0}^{j-1}\contador_t(\mu)$ $a_i$'s greater than $j-1$, then the required number is
\begin{equation}\label{eqAn:productoria}
	\binom{n+1-\sum_{t=0}^{l-1}\contador_t(\mu)}{s^{\mathbf q}_l} \binom{n+1-\sum_{t=0}^{l-2}\contador_t(\mu)-s^{\mathbf q}_l}{s^{\mathbf q}_{l-1}}\cdots \binom{n+1-\contador_0(\mu)-\sum_{j=2}^{l}s^{\mathbf q}_j}{s^{\mathbf q}_1}.
\end{equation}

We have shown that $m_{\tau_{k,l}}(\mu)$ is equal to the sum over $\mathbf q\in \mathcal{Q}_{n+1}(l)$ of  \eqref{eqAn:productoria}.
The theorem now follows by Lemma~\ref{lem:virtualring(tau)An}.
\end{proof}

We now state the closed explicit formulas for the particular cases $l=0$, $1$, and $2$. 
When $l=0$, since $\mathcal{Q}_{n+1}(0)=\{(0,\dots,0)\}$, Theorem~\ref{thmAn:multip(k,l)} immediately implies that every weight as in the hypotheses (i.e.\ $\mu=\sum_{i=1}^{n+1} b_i\varepsilon_i$ with $b_i\in\N_0$ for all $i$ and $\sum_{i=1}^{n+1} b_i=k$) has multiplicity one. 
This fact is very well known because the Young diagram associated to $\pi_{k\varepsilon_1}$ has only one row, and consequently, the number of semistandard tableaux on this diagram of type $\mu$ is one.

We now assume $l=1$. 
Let $\mu$ again as in the hypotheses of Theorem~\ref{thmAn:multip(k,l)}. 
The number of partitions of $1$ is obviously one, i.e.\ $\mathcal{Q}_{n+1}(1)=\{(1,0,\dots,0)\}$, thus $m_{\pi_{k\varepsilon_1+\varepsilon_2}} (\mu)=\binom{n+1-\ell_0(\mu)}{1}-1=n-\ell_0(\mu)$, where $\ell_0(\mu)$ is the number of zeros coordinates of $\mu$. 
It is not difficult to check that the number of semistandard tableaux of type $\mu$ is $n-\ell_0(\mu)$. 

We conclude the article stating the multiplicity formula for the irreducible representation of $\mathfrak{sl}(n+1,\C)$ with highest weight $k\varepsilon_1+2\varepsilon_2$. 
Similarly as above, the proof follows immediately from Theorem~\ref{thmAn:multip(k,l)}, since it reduces to consider the only two partitions of $2$. 
The reader may try to obtain this formula by counting semistandard tableaux of type $\mu$, and convince his/her self that the difficulty will increase for a higher $l$.


\begin{corollary}\label{corAn:for(l=2)}
Let $\mathfrak g=\sll(n+1,\C)$ for some $n\geq2$ and let $k\geq 2$ integer.
Let $\mu=\sum_{i=1}^{n+1} a_i\varepsilon_i \in P(\mathfrak g)$ with $a_i\in \N_0$ for all $i$ and $\sum_{i=1}^{n+1} a_i=k+2$. If $a_i\leq k$ for all $i$, then
 \begin{align*}
m_{\pi_{k\varepsilon_1+2\varepsilon_2}}(\mu)
  &=\binom{n+1-\contador_0(\mu)}{2}-\contador_1(\mu)
 \end{align*}
 and $m_{\pi_{k\varepsilon_1+2\varepsilon_2}}(\mu)=0$ otherwise, where $\contador_t(\mu) = \#\{ i: 1\leq i\leq n+1,\, a_i=t\}$.
\end{corollary}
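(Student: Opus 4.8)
The plan is to specialize Theorem~\ref{thmAn:multip(k,l)} to the case $l=2$ and evaluate the resulting finite sums directly, exactly as announced in the paragraph preceding the statement. First I would observe that the hypotheses placed on $\mu$ (that $a_i\in\N_0$, $\sum_i a_i=k+2$, and $a_i\le k$ for all $i$) are precisely the conditions under which the theorem produces a nonzero expression; hence the vanishing assertion ``$m_{\pi_{k\varepsilon_1+2\varepsilon_2}}(\mu)=0$ otherwise'' is inherited immediately, and it only remains to evaluate the two displayed sums of the theorem for $l=2$.

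For the sum over $\mathcal{Q}_{n+1}(2)$, I would enumerate its two elements, the partitions $\mathbf q=(2,0,\dots,0)$ and $\mathbf q=(1,1,0,\dots,0)$. For the first one has $s_1^{\mathbf q}=0$ and $s_2^{\mathbf q}=1$, so the product over $j=1,2$ reduces to $\binom{n+1-\contador_0(\mu)-1}{0}\,\binom{n+1-\contador_0(\mu)-\contador_1(\mu)}{1}=n+1-\contador_0(\mu)-\contador_1(\mu)$, where the shift by $s_2^{\mathbf q}=1$ lands in the $j=1$ factor. For the second one has $s_1^{\mathbf q}=2$ and $s_2^{\mathbf q}=0$, giving $\binom{n+1-\contador_0(\mu)}{2}\,\binom{n+1-\contador_0(\mu)-\contador_1(\mu)}{0}=\binom{n+1-\contador_0(\mu)}{2}$. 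Hence the first sum equals $\binom{n+1-\contador_0(\mu)}{2}+\bigl(n+1-\contador_0(\mu)-\contador_1(\mu)\bigr)$.

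For the subtracted sum I would note that $\mathcal{Q}_{n+1}(1)=\{(1,0,\dots,0)\}$ has the single element with $s_1^{\mathbf q'}=1$, so its lone product collapses to $\binom{n+1-\contador_0(\mu)}{1}=n+1-\contador_0(\mu)$. Subtracting this from the previous total and simplifying gives $\binom{n+1-\contador_0(\mu)}{2}+\bigl(n+1-\contador_0(\mu)-\contador_1(\mu)\bigr)-\bigl(n+1-\contador_0(\mu)\bigr)=\binom{n+1-\contador_0(\mu)}{2}-\contador_1(\mu)$, which is the claimed formula.

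I do not expect any genuine obstacle: the argument is a bookkeeping exercise, and the ``hard part'' is merely notational vigilance. The two points deserving care are the correct reading of the index range $\sum_{i=j+1}^{l}s_i^{\mathbf q}$ appearing inside the binomial arguments (so that the subtraction of $s_2^{\mathbf q}=1$ is placed in the $j=1$ factor of the partition $(2,0,\dots,0)$ rather than elsewhere), and the systematic use of the convention $\binom{b}{0}=1$ to discard the trivial factors. Once these are respected the computation is entirely mechanical.
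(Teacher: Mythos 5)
Your computation is correct and follows exactly the route the paper indicates: the paper proves this corollary by noting that it "follows immediately from Theorem~\ref{thmAn:multip(k,l)}, since it reduces to consider the only two partitions of $2$," which is precisely your enumeration of $(2,0,\dots,0)$ and $(1,1,0,\dots,0)$ together with the single partition of $1$. Your evaluation of the binomial factors, including the placement of the shift $s_2^{\mathbf q}=1$ in the $j=1$ factor, matches the theorem's formula and yields the stated expression.
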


We end this article with an observation pointed out by the referee, in the same spirit of Remark~\ref{remDn:referee}. 

\begin{remark}\label{remAn:referee}
We consider the `weight zero' in $\pi_{k\varepsilon_{1}+l\varepsilon_2}$, which in our convention is given by $0_{k+l}:= \sum_{i=1}^{n+1} \frac{k+l}{n+1} \varepsilon_i$. 
Clearly, $m_{\pi_{k\varepsilon_1+l\varepsilon_2}}(0_{k+l}) = 0$ unless $n+1$ divides $k+l$. 
Theorem~\ref{thmAn:multip(k,l)} does not give an explicit expression for $m_{\pi_{k\varepsilon_1+l\varepsilon_2}}(0_{k+l})$ like in Corollaries~\ref{corDn:mu=0}--\ref{corBn:mu=0}. 
However, for $l\geq0$ fixed, it implies that $m_{\pi_{k\varepsilon_1+l\varepsilon_2}}(0_{k+l})$ does not depend on $k$, for $k$ sufficiently large satisfying that $n+1$ divides $k+l$.
Moreover, the function $k\mapsto m_{\pi_{k\varepsilon_1+l\varepsilon_2}}(0_{k+l})$ is constant 
for every $k\in ln+(n+1)\N_0$.
Indeed, for such $k$, we have that $\frac{k+l}{n+1}\geq l$, thus $\ell_t(0_{k+l})=0$ for every $0\leq t\leq l-1$. 
\end{remark}

\section*{Acknowledgments}
The authors wish to thank the anonymous referee for pointing out very interesting remarks. 
This research was partially supported by grants from CONICET, FONCyT and SeCyT--UNC. The first named author was supported by the Alexander von Humboldt Foundation. 

\bibliographystyle{plain}

\end{document}